\numberwithin{equation}{section}
\theoremstyle{definition}
\newtheorem{dfn}{Definition}[section]
\theoremstyle{plain}
\newtheorem{thm}[dfn]{Theorem}
\newtheorem{pro}[dfn]{Proposition}
\newtheorem{cor}[dfn]{Corollary}
\newtheorem{lmm}[dfn]{Lemma}
\newtheorem{rem}[dfn]{Remark}
\newcommand{\R}{\mathbb{R}}
\newcommand{\E}{\mathbb{E}}
\renewcommand{\P}{\mathbb{P}}
\newcommand{\Var}{\mathbb{V}\mathrm{ar}}
\newcommand{\Cov}{\mathbb{C}\mathrm{ov}}
\newcommand{\Cor}{\mathbb{C}\mathrm{or}}
\newcommand{\erf}{\mathrm{erf}}
\newcommand{\LN}{\mathcal{L}\!{og}\mathcal{N}\!{ormal}}
\newcommand{\1}{\mathds{1}}
\newcommand{\ceil}[1]{\left\lceil #1 \right\rceil}
\begin{document}
\thispagestyle{empty}
\begin{center}
	{\scshape\large Insights into Tail-Based and Order Statistics}
\end{center}
\vspace*{2cm}
\begin{figure}[H]
	\begin{minipage}[b]{.3\linewidth}
		\includegraphics[scale=1.2]{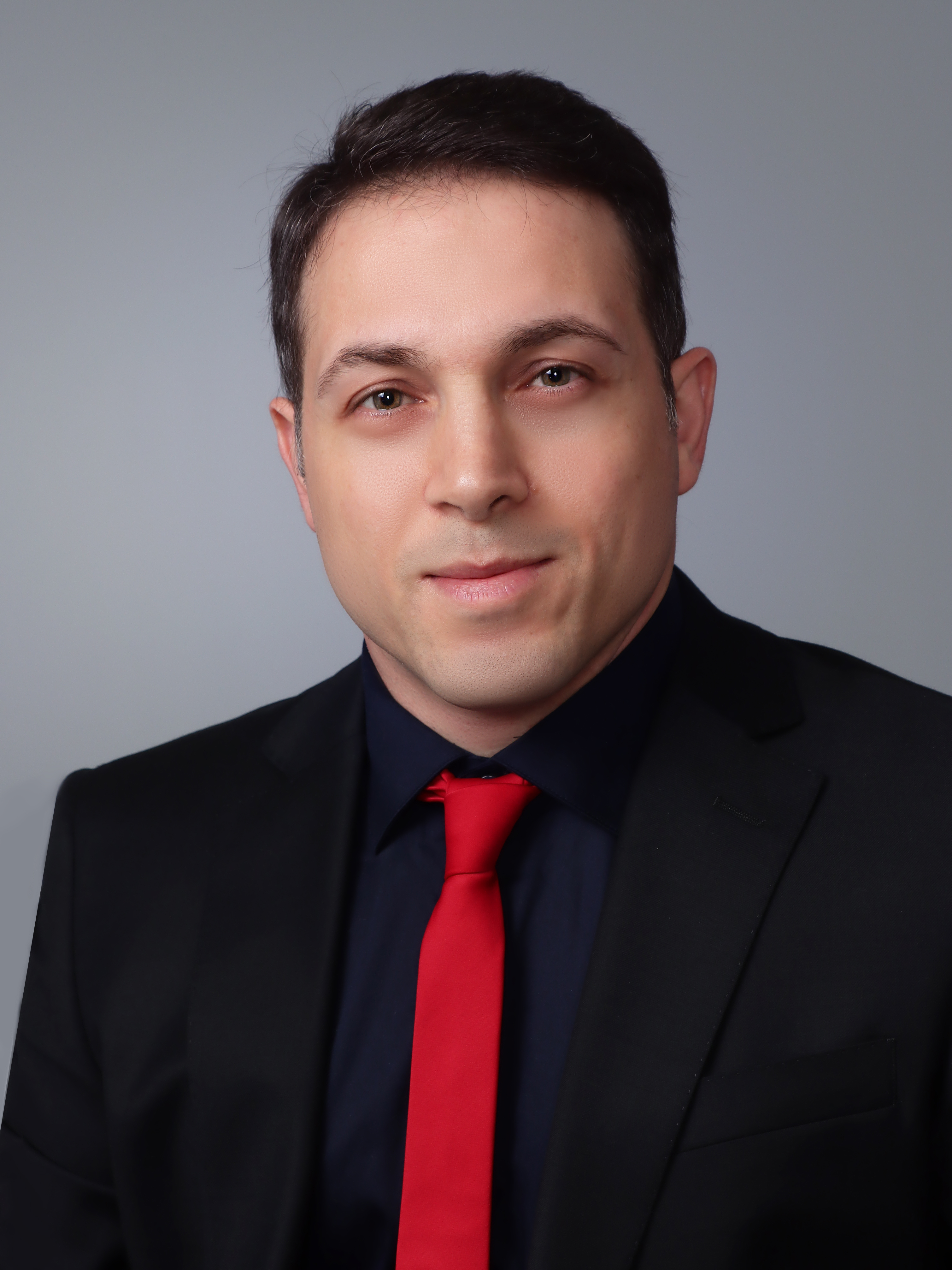}\\
		\vspace{1cm}
	\end{minipage}\hfill
	\begin{minipage}[b]{.7\linewidth}
		{\large\bfseries Hamidreza Maleki Almani}\ {\Large\orcidlink{0000-0002-3071-4982}}\\
		ORCID: \href{https://orcid.org/0000-0002-3071-4982}{https://orcid.org/0000-0002-3071-4982}\\
		Web: \href{https://www.uwasa.fi/en/person/2169161}{https://www.uwasa.fi/en/person/2169161}\\
		\ \\
        This article is an independent work by the author. He is a {\it Postdoctoral Researcher} in the Department of Mathematics and Statistics and the Department of Energy Technology at the University of Vaasa, Finland. He independently conducted and completed all aspects of this study.\\
        \ \\
        November 6, 2025\\
        Vaasa, FINLAND
	\end{minipage}
\end{figure}
\newpage
\setcounter{page}{1}
\title[Insights into Tail-Based and Order Statistics]{}
\vspace*{-1cm}
\begin{center}
	{\Large\bfseries Insights into Tail-Based and Order Statistics
	}
\end{center}
\date{\today}
\vspace*{-1.5cm}
\author[Almani]{Hamidreza Maleki Almani {\Large\orcidlink{0000-0002-3071-4982}}}
\address{Department of Mathematics and Statistics, School of Technology and Innovations, University of Vaasa, P.O. Box 700, FIN-65101 Vaasa, FINLAND}
\address{ORCID: \href{https://orcid.org/0000-0002-3071-4982}{https://orcid.org/0000-0002-3071-4982}}
\email{hmaleki@uwasa.fi}


\begin{abstract}
Heavy-tailed phenomena appear across diverse domains—from wealth and firm sizes in economics to network traffic, biological systems, and physical processes—characterized by the disproportionate influence of extreme values. These distributions challenge classical statistical models, as their tails decay too slowly for conventional approximations to hold. Among their key descriptive measures are quantile contributions, which quantify the proportion of a total quantity (such as income, energy, or risk) attributed to observations above a given quantile threshold.
This paper presents a theoretical study of the quantile contribution statistic and its relationship with order statistics. We derive a closed-form expression for the joint cumulative distribution function (CDF) of order statistics and, based on it, obtain an explicit CDF for quantile contributions applicable to small samples. We then investigate the asymptotic behavior of these contributions as the sample size increases, establishing the asymptotic normality of the numerator and characterizing the limiting distribution of the quantile contribution. Finally, simulation studies illustrate the convergence properties and empirical accuracy of the theoretical results, providing a foundation for applying quantile contributions in the analysis of heavy-tailed data.
\end{abstract}

\keywords{
Heavy-tailed distributions,
Quantile contributions,
Order statistics,
Asymptotic distribution,
Ratio distribution,
Convergence analysis,
Extreme value theory,
Empirical simulation,
}

\subjclass[2020]{
60E05, 62E20, 60F05, 60G15, 60G70, 62G30, 62G32, 62M10, 62P20.}

\maketitle

\section{Introduction}\label{sec:introduction}
In 1906, Pareto, in his first well-known work \cite{pareto1964cours}, showed that approximately 80\% of the land in the Kingdom of Italy was owned by only 20\% of the population at that time. This became known as Pareto's 80/20 principle.
Sturgeon's publications in the 1950s \cite{sturgeon1956Claustrophile,sturgeon1957hand,sturgeon1957offhand,sturgeon1958law} highlighted the observation that the majority of everything is of low quality. However, the prevalence of low-quality content across all genres disproves the notion that any single genre is inherently inferior. This idea is now known as Sturgeon's adage: "{\it Ninety percent of everything is crud!}"
Computer programmers are familiar with this in another form \cite{bentley1985programmimg,mantle2012managing}: in computer programming and software engineering, the Ninety–Ninety Rule is a humorous aphorism that states, “{\it The first 90\% of the code accounts for the first 90\% of the development time, and the remaining 10\% of the code accounts for the other 90\% of the development time!}” This adds up to 180\%, making a wry allusion to the notorious tendency of software development projects to significantly overrun their schedules.
In global health care, as a seriouse issue,  the 10/90 gap is a term adopted by the Global Forum for Health Research to highlight the finding by the Commission on Health Research for Development in 1990 that less than 10\% of worldwide resources devoted to health research were allocated to developing countries—where over 90\% of all preventable deaths worldwide occur (see \cite{vidyasagar2006global,doyal2004gender,global2011report}). This disparity is a major concern of the World Health Organization (WHO) \cite{WHO2004report,davey200410}.
This is observed even more sharply in internet culture \cite{Charles2006onepercent,van20141}. The 1\% rule is a general rule of thumb regarding participation in an online community, stating that only 1\% of a website’s users actively create new content, while the other 99\% simply lurk.

The observations mentioned above relate to a deeper fact beyond mere statistical inference. Informally, to estimate the probability of an event, it is often sufficient to focus on the concentration region of its distribution—provided we have a large enough sample and the distribution’s tails “vanish rapidly enough.” However, this assumption does not hold if the tails are thicker than negligible. In such cases, infrequent events have a significant probability, meaning that the usual “well-behaved” statistical models fail to accurately represent them. This is when the tails of the distribution must be taken into account, leading to what are called heavy-tailed processes. The historical evolution of heavy-tailed phenomena, some of which we have mentioned, reveals the following setup:
\begin{enumerate}
	\item[$\bullet$] {\it Vanishing rapidly enough} means a negligible tail, typically vanishing exponentially,
	\item[$\bullet$] {\it Well-behaved} also refers to distributions with exponentially vanishing tails.
\end{enumerate}
So, a distribution $F$ is heavy-tailed \cite{rolski2009stochastic,foss2011introduction} if $1-F(x)=\P[X>x]\gg e^{-sx}$ for $x\to\infty$ and $s>0$, i.e.,
$$
\lim_{x\to\infty} e^{sx}(1-F(x))\,=\,\infty.
$$ 
Three well-known sub-classes of the heavy-tailed distributions are
\begin{enumerate}
	\item [(i)] Fat-tailed distributions \cite{mikosch1999regular,mandelbrot2010mis} with index $0<\alpha<2$ that 
	$$1-F(x)\sim x^{-\alpha}\text{ for }x\to\infty,$$
	\item [(ii)] Long-tailed distribution \cite{asmussen2003steady} that for all $t>0$ we have
	$$1-F(x+t)\sim 1-F(x)\text{ for }x\to\infty,$$
	\item [(iii)] Subexponential distributions \cite{embrechts2013modelling,chistyakov1964theorem} that for all independent processes $X_1,\ldots,X_n\sim F$ we have 
	$$\P[X_1+\cdots+X_n>x]\sim\P[\max(X_1,\ldots,X_n)>x]\text{ for }x\to\infty.$$
\end{enumerate}

Heavy-tailed distributions are crucial in numerous scientific fields due to their ability to model rare, high-impact events and skewed distributions. In economy, finance, and business, they capture extreme asset returns \cite{mandelbrot1963variation,fama1963mandelbrot}, volatility clustering \cite{cont2001empirical}, and market shocks \cite{sornette2009stock}, enhancing risk modeling and forecasting. Wealth and firm size distributions often follow power laws, aiding economic analysis \cite{gabaix2009power,axtell2001zipf}. These models also inform business strategies in sales, resource allocation, and resilience to demand shocks \cite{taleb2010black,embrechts2013modelling,lux1999scaling}. In computer science, heavy-tailed patterns appear in internet traffic \cite{leland2002self}, file sizes \cite{downey2001structural}, and server loads, affecting network protocols and performance \cite{crovella2002self,paxson1995wide}. They also underpin job scheduling in distributed systems \cite{harchol2003size,LIN2007856} and anomaly detection in cybersecurity \cite{barford2002signal}.

In physics and engineering, heavy-tailed distributions describe anomalous diffusion \cite{shlesinger1993strange}, turbulent transport \cite{metzler2000random}, and structural failure in materials \cite{bavzant2004scaling,castillo2001general}. They are used in modeling impulsive noise and signal degradation in communication systems \cite{nikias1995signal,middleton2002non}, as well as robust signal processing under uncertainty.
In biology and health sciences, these distributions explain superspreading in epidemics \cite{lloyd2005superspreading,endo2020estimating}, scale-free gene and protein networks \cite{barabasi2004network,jeong2001lethality}, and variability in neural dynamics \cite{buzsaki2014log}. They also capture skewed healthcare metrics such as drug response and hospital stays \cite{clauset2009power}. Across disciplines, heavy-tailed distributions support more realistic, data-driven modeling of complex systems, improving prediction, design, and decision-making.

The most important statistics of a heavy-tailed distribution are its quantiles. This is because we aim to identify a precise split of the distribution into two parts: the head and the tail. Specifically, we look for the point below which a considerable percentage $p$ of the population falls (the pth quantile), while the accumulated value of the remaining $(1-p)$ percent above that point constitutes a significant portion of the total value in the population. This measure is known as the {\it quantile contribution} \cite{taleb2015super}. It refers to the proportion of a total quantity—such as income, risk, energy, or emissions—attributed to elements above (or sometimes below) a certain quantile threshold within a statistical distribution. In simple terms, it shows how much of a total amount is accounted for by a particular subset of units ranked by size (e.g., income, energy, or emissions).
The ``natural'' estimator for the quantile contribution is calculated as the ratio of the sum of values above the exceedance threshold (the value above a specific quantile) to the total sum. That is
\begin{equation}\label{eq:Lambda_n}
	\Lambda_n(p) = \frac{\sum_{j\in\mathcal{J}_n(p)}X_j}{\sum_{j=1}^nX_j},
\end{equation}
where $p\in[0,1]$ is a constant number, $X_j, j=1,\ldots,n$ are independently identically distributed (i.i.d), and
\begin{equation}
	\mathcal{J}_n(p) = \{ j\,|\, X_j\in 100p\%\,\text{largest observations among}\, X_1,\ldots,X_n\},
\end{equation}
we note that for all $p\in [0,1], n\ge 1$ we have $|\lambda_n(p)|\le 1$.

In this article, we study the connection between quantile contributions and order statistics, focusing on their distributions and convergence. In Section \ref{sec:OrderedAndTailed}, we derive a closed-form expression for the joint cumulative distribution function (CDF) of order statistics. Building on this, Section \ref{sec:exact.dist} presents an explicit form of the CDF for quantile contributions, applicable to a small number of variables. Section \ref{sec:convergence} explores the convergence of quantile contributions as the number of variables grows large. Section \ref{sect: Asymp.Dist.Num} presents the asymptotic normality of the numerator, and Section \ref{sect: Asymp.Dist.Lamb} applies this result to characterize the asymptotic distribution of quantile contributions for a large number of variables. Finally, in Section \ref{sec:simulate_callibrate}, we present simulations of important cases and cumulative errors to illustrate the empirical performance and accuracy of our results.
\section{Ordered and Tail-Based Statistics}\label{sec:OrderedAndTailed}
To investigate the convergence and distribution of the $\Lambda_n$ given in \eqref{eq:Lambda_n}, first we must consider its close relationship to the order statistics. For the random variables $X_1, X_2,\ldots, X_n$, the associated order statistics are the random variables $X_{(1)}^n,X_{(2)}^n,\ldots,X_{(n)}^n$ defined by ascending resorting of the $X_1, X_2,\ldots, X_n$. Then we have
\begin{equation}\label{eq:LOS}
	\Lambda_n(p) 
	= \frac{\sum_{i=\ceil{np}}^n X_{(i)}^n}{\sum_{i=1}^n X_i}
	= \frac{\sum_{i=\ceil{np}}^n X_{(i)}^n}{\sum_{i=1}^n X_{(i)}^n}.
\end{equation}
In other word, to investigate the probability distribution of $\Lambda_n$, it is sufficient to know the joint distribution of the order statistics $X_{(1)}^n,X_{(2)}^n,\ldots,X_{(n)}^n$.
The following proposition for the distribution of each $X_{(i)}^n$ is explained in \cite{cooke2014fat,reiss2012approximate,arnold2008first}. Here I just rewrite the proof with a quantitative formulation of its combinatorics.
\begin{pro}
	The probability distribution $F_{(i)}^n$ and density function $f_{(i)}^n$ of the order statistic $X_{(i)}^n$ are
	\begin{align}
		F_{(i)}^n(x) &= I\big(F(x);\,i,n-i+1\big)
		= \sum_{J=i}^n\binom{n}{J}\Big(F(x)\Big)^J\Big(1-F(x)\Big)^{n-J},\label{eq:F}\\
		f_{(i)}^n(x) &= \frac{f(x)}{B(i,n-i+1)}\Big(F(x)\Big)^{i-1}\Big(1-F(x)\Big)^{n-i},\label{eq:f}
	\end{align}
	where $F$ and $f$ are respectively the probability distribution and density function of the variable $X_1$, and $B,I$ are respectively the beta function and the regularized inclomplete beta function, i.e., for all $\mathfrak{Re}(p),\mathfrak{Re}(q)>0$
	\begin{gather*}
		B(p,q)=\int_0^1 t^{p-1}(1-t)^{q-1}\,dt,\\
		I(x;\,p,q)=\frac{1}{B(p,q)}\int_0^x t^{p-1}(1-t)^{q-1}\,dt.
	\end{gather*}
\end{pro}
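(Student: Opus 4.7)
The plan is to prove the CDF formula \eqref{eq:F} first by a direct count of how many of the $X_j$ fall at or below $x$, and then to recover the density \eqref{eq:f} either by differentiating the CDF or, more transparently, by an infinitesimal multinomial argument. The author's remark that the novelty is a \emph{quantitative formulation of the combinatorics} suggests that the cleanest route is to make the multinomial labelling explicit and identify the resulting coefficient with $1/B(i,n-i+1)$.

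For the CDF, I would define $N(x) := \#\{j\in\{1,\ldots,n\}: X_j\le x\}$. Since the $X_j$ are i.i.d.\ with distribution $F$, the indicators $\1_{\{X_j\le x\}}$ are i.i.d.\ Bernoulli$(F(x))$, so $N(x)\sim\mathrm{Binomial}(n,F(x))$. The key order-statistic identity is
$$\{X_{(i)}^n\le x\} \;=\; \{N(x)\ge i\},$$
from which the second equality in \eqref{eq:F} follows immediately:
$$F_{(i)}^n(x) \;=\; \P[N(x)\ge i] \;=\; \sum_{J=i}^n\binom{n}{J}F(x)^J(1-F(x))^{n-J}.$$
The identification of this binomial tail with $I(F(x);\,i,n-i+1)$ is then a classical identity, which I would verify by repeated integration by parts in the integral representation of $I$, reducing the exponent of $t$ one step at a time until the boundary terms reconstruct precisely the binomial sum.

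For the density, I would prefer the direct combinatorial derivation. To first order in $dx$, the event $\{X_{(i)}^n\in[x,x+dx]\}$ requires exactly one $X_j$ to lie in $[x,x+dx]$, exactly $i-1$ to lie below $x$, and exactly $n-i$ to lie above $x+dx$. The number of labellings of $\{1,\ldots,n\}$ into these three groups is the multinomial coefficient $\frac{n!}{(i-1)!\,1!\,(n-i)!}$, and each such labelling contributes probability $F(x)^{i-1}\cdot f(x)\,dx\cdot(1-F(x))^{n-i}$. Dividing by $dx$ gives
$$f_{(i)}^n(x) \;=\; \frac{n!}{(i-1)!(n-i)!}\,F(x)^{i-1}(1-F(x))^{n-i}f(x),$$
and the prefactor rewrites as $1/B(i,n-i+1)$ via $B(i,n-i+1)=(i-1)!(n-i)!/n!$. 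As a consistency check, the same formula can be recovered by differentiating \eqref{eq:F} term by term: using $J\binom{n}{J}=n\binom{n-1}{J-1}$ and $(n-J)\binom{n}{J}=n\binom{n-1}{J}$, the resulting sum telescopes and collapses to the single boundary term at $J=i-1$, matching the multinomial expression.

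No step here poses a genuine mathematical obstacle; the only care required is the combinatorial bookkeeping. The trickiest bit will be the explicit verification of the binomial–beta identity, which must be handled carefully enough to justify the first equality in \eqref{eq:F} without simply citing it. The multinomial coefficient, by contrast, gives the density essentially instantly once one commits to a rigorous interpretation of the ``to first order in $dx$'' heuristic (which can, if desired, be made precise by a limit of probabilities of disjoint events).
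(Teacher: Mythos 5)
Your proposal is correct. The CDF half is essentially the paper's argument: the paper likewise writes $\{X_{(i)}^n\le x\}$ as $\{i\le\sum_j\1_{(-\infty,x]}(X_j)\}$ and sums binomial probabilities, so there is nothing to compare there. The two treatments diverge on the remaining points, in ways that mostly favour yours. For the binomial--beta identity, the paper simply displays $\sum_{J=i}^n\binom{n}{J}y^J(1-y)^{n-J}=\int_0^y t^{i}(1-t)^{n-i+1}\,dt\big/B(i,n-i+1)$ without proof (and with the exponents off by one --- they should be $t^{i-1}(1-t)^{n-i}$ for the identity to hold), whereas you propose to establish it by repeated integration by parts; that extra care is warranted and your sketch goes through, since each integration by parts produces one boundary term $\binom{n}{J}y^J(1-y)^{n-J}$ and raises the exponent of $t$ until the integral terminates. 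For the density, the paper differentiates the incomplete-beta representation directly, which is a one-line application of the chain rule and the fundamental theorem of calculus; you instead give the multinomial ``one point in $[x,x+dx]$, $i-1$ below, $n-i$ above'' derivation with the identification $B(i,n-i+1)=(i-1)!(n-i)!/n!$, and back it up with the telescoping term-by-term differentiation (which does collapse to the single $J=i-1$ term, as you claim). The paper's route is shorter once the beta representation is in hand; yours is more self-contained, does not presuppose the unproved integral identity, and makes the combinatorial content of the coefficient explicit --- which is exactly the ``quantitative formulation of the combinatorics'' the paper says it is after. The only caveat is the one you already flag: the infinitesimal argument should ultimately be phrased as a limit of probabilities of disjoint events (or replaced by the derivative computation) to be fully rigorous.
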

\begin{proof}
	As $X_{(i)}^n$ is the $(i/n)$-quantile variable of $X_1,\ldots,X_n$, for all $x\in\R$
	\begin{equation*}
		X_{(i)}^n=Q_n(i/n)\le x\iff i\le\sum_{j=1}^n\1_{(-\infty,x]}(X_j).
	\end{equation*}
	So,
	\begin{align*}
		F_{(j)}^n(x)&=\P[X_{(i)}^n\le x]\\
		&=\P\left[i\le\sum_{j=1}^n\1_{(-\infty,x]}(X_j)\right]\\
		&= \sum_{J=i}^n\P\left[J=\sum_{j=1}^n\1_{(-\infty,x]}(X_j)\right]\\
		&= \sum_{J=i}^n\binom{n}{J}\Big(\P[X_1\le x]\Big)^J\Big(\P[X_1>x]\Big)^{n-J}\\
		&=\sum_{J=i}^n\binom{n}{J}\Big(F(x)\Big)^J\Big(1-F(x)\Big)^{n-J}.
	\end{align*}
	Now, we note
	\begin{align*}
		&\sum_{J=i}^n\binom{n}{J}y^J(1-y)^{n-J}\\ 
		&= \frac{\displaystyle\int_0^y t^i(1-t)^{n-i+1}\,dt}{B(i,n-i+1)}\\ 
		&= I\big(y;\,i,n-i+1\big),
	\end{align*}
	and these prove \eqref{eq:F}. The proof of \eqref{eq:f} is straight forward as follows.
	\begin{align*}
		f_{(j)}^n(x)&=\frac{dF_{(j)}^n(x)}{dx}\\
		&=\frac{dI\big(F(x);\,i,n-i+1\big)}{dx}\\
		&=\frac{f(x)\big(F(x)\big)^{i-1}\big(1-F(x)\big)^{n-i}}{B(i,n-i+1)}.
	\end{align*} 
\end{proof}
The joint density function of the order statistics $X_{(1)}^n,X_{(2)}^n,\ldots,X_{(n)}^n$ is given by \cite{reiss2012approximate,arnold2012relations} as following theorem and corollary.
\begin{thm}\label{thm: density}
	Let $1\le k\le n$ and $0=r_0<r_1<\cdots<r_k<r_{k+1}=n+1$. If the random variables $X_1,X_2,\ldots,X_n$ are i.i.d with common absolutely continuous distribution $F$ and density function $f$, then the joint density function of $X_{(r_1)}^n,X_{(r_2)}^n,\ldots,X_{(r_k)}^n$ is
	\begin{equation}
		f_{(r_1,\ldots,r_k)}^n(x_1,\ldots,x_k) 
		= n!\left(\prod_{i=1}^k f(x_i)\right)
		\prod_{i=1}^{k+1}\frac{\big(F(x_i)-F(x_{i-1})\big)^{r_i-r_{i-1}-1}}{(r_i-r_{i-1}-1)!},
	\end{equation}
	if $x_1<x_2<\cdots<x_k$, and it is $0$ otherwise. Here $F(x_0)=0$ and $F(x_{k+1})=1$.
\end{thm}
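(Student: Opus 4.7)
The plan is to use the standard multinomial/infinitesimal argument for joint order statistics. Fix $x_1<x_2<\cdots<x_k$ and small increments $h_1,\ldots,h_k>0$ such that the intervals $[x_i,x_i+h_i]$ are pairwise disjoint. First I would describe the event
\[
A_h := \bigl\{X_{(r_i)}^n \in [x_i,x_i+h_i] \text{ for all } i=1,\ldots,k\bigr\}
\]
combinatorially. With the conventions $x_0=-\infty$, $h_0=0$, $x_{k+1}=+\infty$, one checks that, up to a set of configurations in which two or more of the $X_j$'s fall in the same interval $[x_i,x_i+h_i]$, $A_h$ reduces to the event that exactly $r_i-r_{i-1}-1$ of the $n$ samples lie in the open interval $(x_{i-1}+h_{i-1},x_i)$ for each $i=1,\ldots,k+1$, and exactly one sample lies in each $[x_i,x_i+h_i]$ for $i=1,\ldots,k$. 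The counts sum to $n$, and the number of samples in $(-\infty,x_i+h_i]$ is exactly $r_i$, which forces the $r_i$-th smallest observation to lie in $[x_i,x_i+h_i]$.

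Second, I would compute the probability of that configuration via the multinomial distribution applied to the partition of $\R$ into these $2k+1$ intervals. The multinomial coefficient equals
\[
\frac{n!}{\prod_{i=1}^{k+1}(r_i-r_{i-1}-1)!},
\]
since the $k$ singleton slots each contribute a factor $1!=1$ to the denominator. Multiplying by $(F(x_i)-F(x_{i-1}+h_{i-1}))^{r_i-r_{i-1}-1}$ for each $i=1,\ldots,k+1$ and by $F(x_i+h_i)-F(x_i)$ for each of the $k$ middle slots gives an exact leading-order expression for $\P[A_h]$.

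Third, dividing by $h_1\cdots h_k$ and letting $h_i\to 0^+$ produces the joint density. Absolute continuity of $F$ gives $(F(x_i+h_i)-F(x_i))/h_i\to f(x_i)$, and each factor $F(x_i)-F(x_{i-1}+h_{i-1})$ converges to $F(x_i)-F(x_{i-1})$ by continuity of $F$. Assembling these limits yields the stated formula, and the ``$0$ otherwise'' clause on the complement of $\{x_1<\cdots<x_k\}$ follows because, when $F$ is absolutely continuous, the order statistics are almost surely strictly increasing, so a joint density supported off the open simplex must vanish there.

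The main obstacle is the clean justification of the ``at most one sample per small interval'' reduction, i.e.\ showing that every other configuration inside $A_h$ contributes $o(h_1\cdots h_k)$; this reduces to observing that any term in which two samples fall in $[x_i,x_i+h_i]$ carries an extra factor $(F(x_i+h_i)-F(x_i))^2 = O(h_i^2)$ beyond the leading term, hence is negligible after division by $\prod h_i$. An entirely equivalent and perhaps cleaner alternative is to start from the full joint density $n!\prod_{i=1}^n f(x_i)\,\1_{\{x_1<\cdots<x_n\}}$ of $(X_{(1)}^n,\ldots,X_{(n)}^n)$ and integrate out the coordinates with indices outside $\{r_1,\ldots,r_k\}$; each block integration over the simplex $\{x_{i-1}<y_1<\cdots<y_{r_i-r_{i-1}-1}<x_i\}$ of $\prod_j f(y_j)$ produces the factor $(F(x_i)-F(x_{i-1}))^{r_i-r_{i-1}-1}/(r_i-r_{i-1}-1)!$ via the substitution $u_j=F(y_j)$ and the standard simplex-volume identity, thereby trading the infinitesimal bookkeeping for iterated integration.
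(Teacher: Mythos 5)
The paper does not actually prove Theorem \ref{thm: density}: it imports the result from the literature (Reiss; Arnold--Balakrishnan) and only proves the downstream CDF formula, so there is no in-paper argument to compare against. Your proposal is the standard and correct proof of this statement, and both routes you sketch work. The multinomial bookkeeping is right: the $2k+1$ cell counts sum to $\sum_{i=1}^{k+1}(r_i-r_{i-1}-1)+k=n$, the leading configuration forces exactly $r_i$ observations in $(-\infty,x_i+h_i]$ and hence $X_{(r_i)}^n\in[x_i,x_i+h_i]$, and every other configuration inside $A_h$ puts two points in some slot and is therefore $O(h_i)$ smaller than the leading term. Two small points deserve care. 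First, $(F(x_i+h_i)-F(x_i))/h_i\to f(x_i)$ holds only at Lebesgue points of $f$, i.e.\ for almost every $x_i$; since a density is only determined almost everywhere this is harmless, but it is worth saying. Second, the infinitesimal argument by itself identifies a pointwise limit of $\P[A_h]/\prod_i h_i$ rather than certifying that this limit is a version of the joint density; your integration alternative closes that gap cleanly, since starting from $n!\prod_{i=1}^n f(x_i)\1_{\{x_1<\cdots<x_n\}}$ (itself elementary by summing over the $n!$ orderings) and integrating each block over its simplex yields exactly the factors $(F(x_i)-F(x_{i-1}))^{r_i-r_{i-1}-1}/(r_i-r_{i-1}-1)!$, and it also sits well with the paper's own structure, where Corollary \ref{cor: fnn} is just the $k=n$ case. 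The ``$0$ otherwise'' clause is immediate from absolute continuity, as you note.
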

\begin{cor}\label{cor: fnn}
	If the random variables $X_1,X_2,\ldots,X_n$ are i.i.d with common absolutely continuous distribution $F$ and density function $f$, then the joint density function of $X_{(1)}^n,X_{(2)}^n,\ldots,X_{(n)}^n$ is
	\begin{equation}
		f_{(1,\ldots,n)}^n(x_1,\ldots,x_n) 
		= n!\prod_{i=1}^n f(x_i),
	\end{equation}
	if $x_1<x_2<\cdots<x_n$, and it is $0$ otherwise.
\end{cor}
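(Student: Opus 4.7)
The plan is to derive the corollary as an immediate specialization of Theorem \ref{thm: density}. First I would set $k = n$ together with $r_i = i$ for $i = 0, 1, \ldots, n, n+1$; this respects the required strict ordering $0 = r_0 < r_1 < \cdots < r_n < r_{n+1} = n+1$. Next I would observe that with this choice every consecutive gap $r_i - r_{i-1}$ equals $1$, hence each exponent $r_i - r_{i-1} - 1$ appearing in the theorem's product vanishes. Using the conventions $a^0 = 1$ and $0! = 1$, every factor in $\prod_{i=1}^{n+1}(F(x_i) - F(x_{i-1}))^{r_i - r_{i-1} - 1}/(r_i - r_{i-1} - 1)!$ collapses to $1$. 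What remains on the region $\{x_1 < \cdots < x_n\}$ is exactly $n!\prod_{i=1}^n f(x_i)$, and the support restriction is inherited verbatim from the theorem, which completes the identification.

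As a complementary sanity check I would also invoke the classical symmetry argument: the vector $(X_1, \ldots, X_n)$ has joint density $\prod_{i=1}^n f(x_i)$, and any ordered outcome $x_1 < \cdots < x_n$ arises from exactly $n!$ equally likely permutations of the raw sample; marginalizing over these $n!$ disjoint permutation regions reproduces the factor $n!$ and agrees with the theorem-based derivation. Since the proof amounts to a one-line substitution into an already-established formula, there is no genuine obstacle. The only point requiring a moment of care is the boundary index $i = n+1$, where one must note that $r_{n+1} - r_n - 1 = 0$ forces that factor to $1$ without any reliance on the boundary values $F(x_0) = 0$ and $F(x_{n+1}) = 1$ stipulated in the theorem.
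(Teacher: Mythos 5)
Your proof is correct and matches the paper's treatment: the paper states this corollary without proof as the immediate $k=n$, $r_i=i$ specialization of Theorem \ref{thm: density}, which is precisely your substitution argument. Your added symmetry check is a fine independent confirmation but not needed.
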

Next, we evaluate the cumulative distribution function of the order statistics. However, this requires some insight into the relationship between the Binomial distribution and the regularized incomplete Beta function, as presented in the following lemma.
\begin{lmm}\label{lmm: Ibeta}
For all positive integers $p,q\ge 1$, and all $a,b\in\R$
	\begin{align*}
	I_{a,b}(y;\,p,q)
	&:=\frac{1}{B(p,q)}\int_a^y (x-a)^{p-1}(b-x)^{q-1}\,dx\\
	&=\sum_{j=p}^{p+q-1}\binom{p+q-1}{j}(y-a)^{j}(b-y)^{p+q-1-j}\\
	&=\sum_{j=0}^{q-1}\binom{p+q-1}{j}(y-a)^{p+q-1-j}(b-y)^{j}.
	\end{align*} 
\end{lmm}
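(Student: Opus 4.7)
The plan is to reduce the claimed identity to the classical relationship between the regularized incomplete beta function $I(\cdot;p,q)$ on $[0,1]$ and the tail of the binomial distribution, then translate back via an affine change of variable.

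First, I would perform the substitution $t=(x-a)/(b-a)$ in the integral defining $I_{a,b}(y;p,q)$. Since $x-a=(b-a)t$, $b-x=(b-a)(1-t)$, and $dx=(b-a)\,dt$, a direct computation gives
\begin{equation*}
I_{a,b}(y;p,q)\;=\;(b-a)^{p+q-1}\,I\!\left(\frac{y-a}{b-a};\,p,q\right),
\end{equation*}
where $I(\cdot;p,q)$ is the standard regularized incomplete beta function defined in the text. Hence it suffices to establish the classical identity
\begin{equation*}
I(u;p,q)\;=\;\sum_{j=p}^{p+q-1}\binom{p+q-1}{j}u^{j}(1-u)^{p+q-1-j}\qquad(u\in[0,1]),
\end{equation*}
and then substitute $u=(y-a)/(b-a)$; the factor $(b-a)^{p+q-1}$ absorbs exactly into $(b-a)^{-j}(b-a)^{-(p+q-1-j)}$ inside each summand, yielding the first formula of the lemma in terms of $(y-a)$ and $(b-y)$.

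To prove the classical identity, I would use integration by parts on $\int_0^u t^{p-1}(1-t)^{q-1}\,dt$ with $dv=t^{p-1}\,dt$, $w=(1-t)^{q-1}$. After dividing by $B(p,q)$ and simplifying with the factorial form $B(p,q)=(p-1)!(q-1)!/(p+q-1)!$, this produces the recursion
\begin{equation*}
I(u;p,q)\;=\;\binom{p+q-1}{p}u^{p}(1-u)^{q-1}\;+\;I(u;p+1,q-1),\qquad q\ge 2.
\end{equation*}
Iterating this recursion $q-1$ times down to the base case $I(u;p+q-1,1)=u^{p+q-1}$ (which follows by direct integration) accumulates precisely the asserted sum with indices $j=p,p+1,\ldots,p+q-1$. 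The second form in the lemma is then an immediate consequence of the symmetry $\binom{p+q-1}{j}=\binom{p+q-1}{p+q-1-j}$ under the reindexing $j\mapsto p+q-1-j$.

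The argument is essentially bookkeeping; the only place that requires a little care is checking that the constants in the recursion collapse to the single binomial coefficient $\binom{p+q-1}{p}$, which amounts to the two factorial identities $1/(p\,B(p,q))=\binom{p+q-1}{p}/(p+q-1)\cdot(\text{stuff})$ and $(q-1)B(p+1,q-1)/(p\,B(p,q))=1$. Both are elementary, so the proof contains no real obstacle beyond keeping the indices straight when iterating the recursion.
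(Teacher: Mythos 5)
Your proof is correct, and its skeleton matches the paper's: both begin with the affine substitution $t=(x-a)/(b-a)$ to reduce $I_{a,b}(y;p,q)$ to $(b-a)^{p+q-1}I\bigl(\tfrac{y-a}{b-a};p,q\bigr)$, and both then obtain the sum from the classical identity linking the regularized incomplete beta function to the binomial tail. The difference lies in how that identity is handled. The paper simply invokes it as a known probabilistic fact, writing $I(u;p,q)=\P[J\ge p]$ for $J\sim\mathcal{B}inomial(u;p+q-1)$ and reading off the sum, whereas you derive it from scratch by integration by parts, establishing the recursion
\begin{equation*}
I(u;p,q)=\binom{p+q-1}{p}u^{p}(1-u)^{q-1}+I(u;p+1,q-1),\qquad q\ge 2,
\end{equation*}
and iterating down to the base case $I(u;p+q-1,1)=u^{p+q-1}$. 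I checked the constants: dividing the integration-by-parts output by $B(p,q)=(p-1)!(q-1)!/(p+q-1)!$ does produce exactly $\binom{p+q-1}{p}$ in the boundary term and exactly $1/B(p+1,q-1)$ in front of the remaining integral, so the recursion is right, and the reindexing $j\mapsto p+q-1-j$ for the second form is immediate. Your route buys self-containedness — the lemma no longer rests on an uncited classical fact — at the cost of the index bookkeeping you acknowledge; the paper's route is shorter and makes the probabilistic meaning transparent, which is the interpretation it actually uses downstream in the combinatorial proof of Theorem \ref{thm: CDF_Ordered}. One shared loose end, not a gap in your argument specifically: the statement's ``for all $a,b\in\R$'' should really be read as $a<y<b$ (or handled by noting that both sides are polynomials in $y$, $a$, $b$, so the identity extends once proved on that range); your substitution, like the paper's, tacitly assumes $b\ne a$.
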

\begin{proof}
	By changing the variable $t=\frac{x-a}{b-a}$, we have
	\begin{equation*}
		I_{a,b}(y;p,q)=(b-a)^{p+q-1}I\left(\frac{y-a}{b-a};p,q\right),
	\end{equation*}
	and
	\begin{align*}
	I\left(u;p,q\right)&=\P[J\ge p]\\
	&=\sum_{j=p}^{p+q-1}\binom{p+q-1}{j}u^{j}(1-u)^{p+q-1-j}\\
	&=\sum_{j=0}^{q-1}\binom{p+q-1}{j}u^{p+q-1-j}(1-u)^{j},
	\end{align*} 
	where $J\sim\mathcal{B}inomial(u;p+q-1)$. Now, substituting $u=\frac{y-a}{b-a}$ proves the claim.
\end{proof}
\begin{thm}\label{thm: CDF_Ordered}
	Let $1\le k\le n$ and $0<r_1<\cdots<r_k<n+1$ are integers. If the random variables $X_1,X_2,\ldots,X_n$ are i.i.d with common absolutely continuous distribution $F$ and density function $f$, then the cumulative distribution function of $X_{(r_1)}^n,X_{(r_2)}^n,\ldots,X_{(r_k)}^n$ is
	\begin{align}
		&F_{(r_1,\ldots,r_k)}^n(x_1,\ldots,x_k)\notag\\ 
		&=\sum_{J_k=0}^{n-r_k}\quad\sum_{J_{k-1}=0}^{n-r_{k-1}-J_k}\quad\sum_{J_{k-2}=0}^{n-r_{k-2}-J_k-J_{k-1}}\cdots\sum_{J_1=0}^{n-r_1-\sum_{i=2}^kJ_i}\notag\\
		&\quad\binom{n}{J_0,J_1,\ldots,J_k}\prod_{i=0}^{k}\Big(F\left(x_{(i+1)}^k\right)-F\left(x_{(i)}^k\right)\Big)^{J_i},\notag\\
		&s.t.\quad\sum_{i=0}^k J_i = n,\label{eq: P0}
	\end{align}
	if $x_1<x_2<\cdots<x_k$, and it is $0$ otherwise. Here $F(x_0)=0$, and $F(x_{k+1})=1$. (Note: $J_0 = n - \sum_{i=1}^k J_i$)
\end{thm}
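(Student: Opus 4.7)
My strategy avoids integrating the joint density of Theorem~\ref{thm: density} directly, and instead translates the event $\{X_{(r_1)}^n\le x_1,\ldots,X_{(r_k)}^n\le x_k\}$ into a combinatorial statement about a multinomial count vector. For fixed $x_1<x_2<\cdots<x_k$, the individual positions of the $X_j$'s are irrelevant; only the counts in the intervals $(x_i,x_{i+1}]$ matter, and these carry a tractable multinomial law. Concretely, for $i=0,1,\ldots,k$ I would introduce
$$J_i=\#\bigl\{\,j\in\{1,\ldots,n\}\ :\ x_i<X_j\le x_{i+1}\,\bigr\},$$
with the convention $x_0=-\infty$ and $x_{k+1}=+\infty$, so that $F(x_0)=0$ and $F(x_{k+1})=1$. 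Since the $X_j$'s are i.i.d.\ with distribution $F$, the vector $(J_0,J_1,\ldots,J_k)$ is multinomial on $n$ trials with cell probabilities $p_i=F(x_{i+1})-F(x_i)$, giving
$$\P[J_0=j_0,\ldots,J_k=j_k]=\binom{n}{j_0,j_1,\ldots,j_k}\prod_{i=0}^k\bigl(F(x_{i+1})-F(x_i)\bigr)^{j_i}$$
whenever $j_0+\cdots+j_k=n$.

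Next I would rewrite the event in terms of these counts. The inequality $X_{(r_i)}^n\le x_i$ is equivalent to saying that at least $r_i$ of the $X_j$'s lie in $(-\infty,x_i]$, i.e.\ $J_0+J_1+\cdots+J_{i-1}\ge r_i$, or, using $\sum_\ell J_\ell=n$,
$$J_i+J_{i+1}+\cdots+J_k\le n-r_i.$$
Imposing this for every $i=1,\ldots,k$ defines the feasible region; summing the multinomial mass over it produces the joint CDF. This is where the structural content of the proof lives.

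Finally I would convert the feasible region into a nested sum by peeling variables from the top down: $J_k$ ranges over $0\le J_k\le n-r_k$, then $J_{k-1}$ ranges over $0\le J_{k-1}\le n-r_{k-1}-J_k$, and so on until $J_1$ whose upper bound is $n-r_1-\sum_{i=2}^k J_i$, while $J_0=n-\sum_{i=1}^k J_i$ is pinned by the multinomial constraint. That is precisely the iterated sum in \eqref{eq: P0}. The one point that requires genuine care, and which I expect to be the main obstacle, is verifying that this cascade of bounds faithfully enumerates the region without generating spurious or empty ranges; this will reduce to the telescoping observation that, having already imposed $\sum_{\ell=k-m+1}^k J_\ell\le n-r_{k-m+1}$, the next upper bound satisfies $n-r_{k-m}-\sum_{j=k-m+1}^k J_j\ge r_{k-m+1}-r_{k-m}>0$, so no range collapses. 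Degenerate situations where two $F(x_i)$'s coincide are handled automatically, since the factor $(F(x_{i+1})-F(x_i))^{J_i}$ kills every term except $J_i=0$.
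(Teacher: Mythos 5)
Your proposal is correct and coincides in substance with the paper's second proof (``Proof 2: Combinatorics''): the same interval counts $J_i$, the same translation of $X_{(r_i)}^n\le x_i$ into $\sum_{\ell\ge i}J_\ell\le n-r_i$, and the same top-down peeling into the nested sum, with your direct appeal to the multinomial law replacing the paper's step-by-step conditioning that assembles $\binom{n}{J_k}\binom{n-J_k}{J_{k-1}}\cdots$ into the multinomial coefficient. The telescoping bound $n-r_i-\sum_{\ell>i}J_\ell\ge r_{i+1}-r_i>0$ that you flag as the delicate point is exactly the right check and holds as you state.
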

\begin{proof}[\bf Proof 1: Calculus]
	Applying the density function from Theorem \ref{thm: density}, for $y_1\le\cdots\le y_k$ we have
	\begin{align*}
		&F_{(r_1,\ldots,r_k)}^n(y_1,\ldots,y_k)\\
		&=n!\int_{-\infty}^{y_1}\int_{x_1}^{y_2}\cdots\int_{x_{k-2}}^{y_{k-1}}\int_{x_{k-1}}^{y_k}dx_k\cdots dx_1\\
		&\quad\times\left(\prod_{i=1}^k f(x_i)\right)
		\prod_{i=1}^{k+1}\frac{\big(F(x_i)-F(x_{i-1})\big)^{r_i-r_{i-1}-1}}{(r_i-r_{i-1}-1)!},
	\end{align*}
	where $r_0=0$ and $r_{k+1}=n+1$. By changing the variables $u_i=F(x_i),\, i=1,\ldots k+1$, we have
	\begin{align*}
		&=n!\int_{0}^{F(y_1)}\int_{u_1}^{F(y_2)}\cdots\int_{u_{k-2}}^{F(y_{k-1})}\int_{u_{k-1}}^{F(y_k)}
		\prod_{i=1}^{k+1}\frac{(u_i-u_{i-1})^{r_i-r_{i-1}-1}}{(r_i-r_{i-1}-1)!}\,du_k\cdots du_1\\
		&=n!\underbrace{\int_{0}^{F(y_1)}\int_{u_1}^{F(y_2)}\cdots\int_{u_{k-2}}^{F(y_{k-1})}
		\prod_{i=1}^{k-1}\frac{(u_i-u_{i-1})^{r_i-r_{i-1}-1}}{(r_i-r_{i-1}-1)!}\,du_{k-1}\cdots du_1}_{\pi_{k-1}}\\
		&\quad\times\int_{u_{k-1}}^{F(y_k)}\frac{(u_{k+1}-u_{k})^{r_{k+1}-r_k-1}(u_k-u_{k-1})^{r_k-r_{k-1}-1}}{(r_{k+1}-r_k-1)!(r_k-r_{k-1}-1)!}\, du_k\\
		&= n!\,\pi_{k-1}\int_{u_{k-1}}^{F(y_k)}\frac{(u_{k+1}-u_{k})^{r_{k+1}-r_k-1}(u_k-u_{k-1})^{r_k-r_{k-1}-1}}{B(r_{k+1}-r_k,r_k-r_{k-1}-1)(r_{k+1}-r_{k-1}-1)!}\, du_k.
	\end{align*}
	Taking $p_k=r_k-r_{k-1}, q_k=r_{k+1}-r_k$, then by Lemma \ref{lmm: Ibeta}
	\begin{align*}
		&= \frac{n!\,\pi_{k-1}}{(r_{k+1}-r_{k-1}-1)!}\,I_{u_{k-1},u_{k+1}}\Big(F(y_k);r_k-r_{k-1},r_{k+1}-r_k\Big)\\
		&= \frac{n!\,\pi_{k-1}}{(r_{k+1}-r_{k-1}-1)!}\,I_{u_{k-1},u_{k+1}}\big(F(y_k);p_k,q_k\big)\\
		&=\frac{n!\,\pi_{k-1}}{(r_{k+1}-r_{k-1}-1)!}\\
		&\times\sum_{J_k=0}^{q_k-1}\binom{p_k+q_k-1}{J_k}\big(F(y_k)-u_{k-1}\big)^{p_k+q_k-1-J_k}\big(u_{k+1}-F(y_k)\big)^{J_k}\\
		&=\frac{n!\,\pi_{k-1}}{(r_{k+1}-r_{k-1}-1)!}\\
		&\times\sum_{J_k=0}^{r_{k+1}-r_k-1}\binom{r_{k+1}-r_{k-1}-1}{J_k}\big(F(y_k)-u_{k-1}\big)^{r_{k+1}-r_{k-1}-1-J_k}\big(u_{k+1}-F(y_k)\big)^{J_k}\\
		&=n!\,\pi_{k-2}\underbrace{\sum_{J_k=0}^{r_{k+1}-r_k-1}\frac{\big(u_{k+1}-F(y_k)\big)^{J_k}}{J_k!}}_{\Sigma_{J_k}}\\
		&\times\int_{u_{k-2}}^{F(y_{k-1})}\frac{(u_{k-1}-u_{k-2})^{r_{k-1}-r_{k-2}-1}}{(r_{k-1}-r_{k-2}-1)!}\cdot\frac{(F(y_k)-u_{k-1})^{r_{k+1}-r_{k-1}-1-J_k}}{(r_{k+1}-r_{k-1}-1-J_k)!}\, du_{k-1}\\
		&=n!\,\pi_{k-2}\Sigma_{J_k}\\
		&\times\int_{u_{k-2}}^{F(y_{k-1})}\frac{(u_{k-1}-u_{k-2})^{r_{k-1}-r_{k-2}-1}\cdot(F(y_k)-u_{k-1})^{r_{k+1}-r_{k-1}-1-J_k}}{B(r_{k-1}-r_{k-2},r_{k+1}-r_{k-1}-J_k)\Gamma(r_{k+1}-r_{k-2}-J_k)}\, du_{k-1}.
	\end{align*}
	Again, by taking $p_{k-1}=r_{k-1}-r_{k-2},\, q_{k-1}=r_{k+1}-r_{k-1}-J_k$, then from Lemma \ref{lmm: Ibeta}
	\begin{align*}
		&=\frac{n!\,\pi_{k-2}\Sigma_{J_k}}{(r_{k+1}-r_{k-2}-J_k-1)!}\, I_{u_{k-2},F(y_k)}\Big(F(y_{k-1});p_{k-1},q_{k-1}\Big)\\
		&=\frac{n!\,\pi_{k-2}\Sigma_{J_k}}{(r_{k+1}-r_{k-2}-J_k-1)!}
		\sum_{J_{k-1}=0}^{q_{k-1}-1}\binom{p_{k-1}+q_{k-1}-1}{J_{k-1}}\\
		&\times\big(F(y_{k-1})-u_{k-2}\big)^{p_{k-1}+q_{k-1}-1-J_{k-1}}\big(u_k-F(y_{k-1})\big)^{J_{k-1}}\\
		&=\frac{n!\,\pi_{k-2}\Sigma_{J_k}}{(r_{k+1}-r_{k-2}-J_k-1)!}
		\sum_{J_{k-1}=0}^{r_{k+1}-r_{k-1}-J_k-1}\binom{r_{k+1}-r_{k-2}-J_k-1}{J_{k-1}}\\
		&\times\big(F(y_{k-1})-u_{k-2}\big)^{r_{k+1}-r_{k-2}-J_k-J_{k-1}-1}\big(u_k-F(y_{k-1})\big)^{J_{k-1}}\\
		&=n!\,\pi_{k-3}\Sigma_{J_k}\underbrace{\sum_{J_{k-1}=0}^{r_{k+1}-r_{k-1}-J_k-1}\frac{\big(F(y_k)-F(y_{k-1})\big)^{J_{k-1}}}{J_{k-1}!}}_{\Sigma_{J_{k-1}}}\\
		&\hspace{2cm}\times\int_{u_{k-3}}^{F(y_{k-2})}\frac{(u_{k-2}-u_{k-3})^{r_{k-2}-r_{k-3}-1}}{(r_{k-2}-r_{k-3}-1)!}\\
		&\hspace{2cm}\cdot\frac{(F(y_{k-1})-u_{k-2})^{r_{k+1}-r_{k-2}-J_k-J_{k-1}-1}}{(r_{k+1}-r_{k-2}-J_k-J_{k-1}-1)!}\, du_{k-2}\\
		&=n!\,\pi_{k-3}\Sigma_{J_k}\Sigma_{J_{k-1}}\int_{u_{k-3}}^{F(y_{k-2})}\frac{(u_{k-2}-u_{k-3})^{r_{k-2}-r_{k-3}-1}}{\Gamma(r_{k+1}-r_{k-3}-J_k-J_{k-1})}\\
		&\hspace{3cm}\times\frac{(F(y_{k-1})-u_{k-2})^{r_{k+1}-r_{k-2}-J_k-J_{k-1}-1}}{B(r_{k-2}-r_{k-3},r_{k+1}-r_{k-2}-J_k-J_{k-1})}\, du_{k-2},
	\end{align*}
	To identify the limits of the summations, we proceed one step further, and again by taking $p_{k-2}=r_{k-2}-r_{k-3},\, q_{k-2}=r_{k+1}-r_{k-2}-J_k-J_{k-1}$, from Lemma \ref{lmm: Ibeta}, we have
	\begin{align*}
		&=\frac{n!\,\pi_{k-3}\Sigma_{J_k}\Sigma_{J_{k-1}}}{(r_{k+1}-r_{k-3}-J_k-J_{k+1}-1)!}\, I_{u_{k-3},F(y_{k-1})}\Big(F(y_{k-2});p_{k-2},q_{k-2}\Big)\\
		&=\frac{n!\,\pi_{k-3}\Sigma_{J_k}\Sigma_{J_{k-1}}}{(r_{k+1}-r_{k-3}-J_k-J_{k-1}-1)!}
		\sum_{J_{k-2}=0}^{q_{k-2}-1}\binom{p_{k-2}+q_{k-2}-1}{J_{k-2}}\\
		&\times\big(F(y_{k-2})-u_{k-3}\big)^{p_{k-2}+q_{k-2}-1-J_{k-2}}\big(u_k-F(y_{k-1})\big)^{J_{k-2}}\\
		&=\frac{n!\,\pi_{k-3}\Sigma_{J_k}\Sigma_{J_{k-1}}}{(r_{k+1}-r_{k-3}-J_k-J_{k-1}-1)!}\\
		&\times\sum_{J_{k-2}=0}^{r_{k+1}-r_{k-2}-J_k-J_{k-1}-1}\binom{r_{k+1}-r_{k-3}-J_k-J_{k-1}-1}{J_{k-2}}\\
		&\times\big(F(y_{k-2})-u_{k-3}\big)^{r_{k+1}-r_{k-3}-J_k-J_{k-1}-J_{k-2}-1}\big(F(y_{k-1}-F(y_{k-1})\big)^{J_{k-2}}\\
		&=n!\,\pi_{k-4}\Sigma_{J_k}\Sigma_{J_{k-1}}\underbrace{\sum_{J_{k-2}=0}^{r_{k+1}-r_{k-2}-J_k-J_{k-1}-1}\frac{\big(F(y_{k-1})-F(y_{k-2})\big)^{J_{k-2}}}{J_{k-2}!}}_{\Sigma_{J_{k-2}}}\\
		&\hspace{3cm}\times\int_{u_{k-4}}^{F(y_{k-3})}\frac{(u_{k-3}-u_{k-4})^{r_{k-3}-r_{k-4}-1}}{(r_{k-3}-r_{k-4}-1)!}\\
		&\hspace{3cm}\cdot\frac{(F(y_{k-2})-u_{k-3})^{r_{k+1}-r_{k-3}-J_k-J_{k-1}-J_{k-2}-1}}{(r_{k+1}-r_{k-3}-J_k-J_{k-1}-J_{k-2}-1)!}\, du_{k-3}.
	\end{align*}
	Continuing this calculation recursively, we obtain
	\begin{align*}
		&=n!\Sigma_{J_k}\cdots\Sigma_{J_2}\int_{u_0}^{F(y_1)}\frac{(u_1-u_0)^{r_1-r_0-1}}{(r_1-r_0-1)!}\cdot\frac{(F(y_2)-u_1)^{r_{k+1}-r_1-1-\sum_{i=2}^kJ_i}}{(r_{k+1}-r_1-1-\sum_{i=2}^kJ_i)!}\,du_1\\
		&=n!\Sigma_{J_k}\cdots\Sigma_{J_2}\int_{0}^{F(y_1)}\frac{u_1^{r_1-1}(F(y_2)-u_1)^{r_{k+1}-r_1-1-\sum_{i=2}^kJ_i}}{B(r_1,r_{k+1}-r_1-\sum_{j=2}^kJ_i)(r_{k+1}-1-\sum_{i=2}^kJ_i)!}\,du_1.
	\end{align*}
	By taking $p_1=r_1, q_1=r_{k+1}-r_1-\sum_{i=2}^{k}J_i$, we have
	\begin{align*}
		&=n!\Sigma_{J_k}\cdots\Sigma_{J_2}\frac{I_{0,F(y_2)}\big(F(y-1);p_1,q_1\big)}{(r_{k+1}-1-\sum_{i=2}^{k}J_i)!}\\
		&=n!\Sigma_{J_k}\cdots\Sigma_{J_2}\frac{1}{(r_{k+1}-1-\sum_{i=2}^{k}J_i)!}\\
		&\times\sum_{J_1=0}^{q_1-1}\binom{p_1+q_1-1}{J_1}\Big(F(y_1)\Big)^{p_1+q_1-1-J_1}\Big(F(y_2)-F(y_1)\Big)^{J_1}\\
		&=n!\Sigma_{J_k}\cdots\Sigma_{J_2}\frac{1}{(r_{k+1}-1-\sum_{i=2}^{k}J_i)!}\sum_{J_1=0}^{r_{k+1}-r_1-1-\sum_{i=2}^{k}J_i}\binom{r_{k+1}-1-\sum_{i=2}^{k}J_i}{J_1}\\
		&\times\Big(F(y_1)\Big)^{r_{k+1}-1-\sum_{i=2}^{k}J_i}\Big(F(y_2)-F(y_1)\Big)^{J_1}\\
		&=n!\Sigma_{J_k}\cdots\Sigma_{J_2}\sum_{J_1=0}^{r_{k+1}-r_1-1-\sum_{i=2}^{k}J_i}\frac{\big(F(y_2)-F(y_1)\big)^{J_1}}{J_1!}\cdot\frac{\big(F(y_1)\big)^{r_{k+1}-1-\sum_{i=1}^{k}J_i}}{(r_{k+1}-1-\sum_{i=1}^{k}J_i)!}\\
		&=\sum_{J_k=0}^{r_{k+1}-r_k-1}\quad\sum_{J_{k-1}=0}^{r_{k+2}-r_{k-1}-J_k-1}\quad\sum_{J_{k-2}=0}^{r_{k+1}-r_{k-2}-J_k-J_{k-1}-1}\cdots\sum_{J_1=0}^{r_{k+1}-r_1-1-\sum_{i=2}^{k}J_i}\\
		&\frac{n!}{(r_{k+1}-1-\sum_{i=1}^{k}J_i)!J_1!\cdots J_k!}\\
		&\times\big(F(y_{k+1})-F(y_k)\big)^{J_k}\big(F(y_k)-F(y_{k-1})\big)^{J_{k-1}}\cdots\big(F(y_1)-F(y_0)\big)^{r_{k+1}-1-\sum_{i=1}^{k}J_i}
	\end{align*}
	where $F(y_0)=0$ and $F(y_{k+1})=1$. Then, taking $J_0=r_{k+1}-1-\sum_{i=1}^{k}J_i=n-\sum_{i=1}^{k}J_i$, we have
	\begin{align}\label{eq: CDF_y}
	&=\sum_{J_k=0}^{n-r_k}\quad\sum_{J_{k-1}=0}^{n-r_{k-1}-J_k}\quad\sum_{J_{k-2}=0}^{n-r_{k-2}-J_k-J_{k-1}}\cdots\sum_{J_1=0}^{n-r_1-\sum_{i=2}^kJ_i}\\
	&\quad\binom{n}{J_0,J_1,\ldots,J_k}\prod_{i=0}^{k}\Big(F\left(y_{i+1}\right)-F\left(y_{i}\right)\Big)^{J_i},\notag\\
	&s.t.\quad\sum_{i=0}^k J_i = n.\notag
    \end{align}
    Now, for arbitrary $y_1,\ldots,y_k$, we have 
    \begin{align}
    	&F_{(r_1,\ldots,r_k)}^n(y_1,\ldots,y_k)\notag\\
    	&=\P\left[X_{(1)}^n\le y_1,\ldots,X_{(k)}^n\le y_k\right]\notag\\
    	&=\P\left[X_{(1)}^n\le y_{(1)}^k,\ldots,X_{(k)}^n\le y_{(k)}^k\right]\notag\\
    	&\label{eq: CDF_range}=F_{(r_1,\ldots,r_k)}^n\left(y_{(1)}^k,\ldots,y_{(k)}^k\right),
    \end{align}
    and as $y_{(1)}^k\le\cdots\le y_{(k)}^k$, applying \eqref{eq: CDF_y} to \eqref{eq: CDF_range} returns
    \begin{align*}
    	&F_{(r_1,\ldots,r_k)}^n(y_1,\ldots,y_k)\\ 
    	&=\sum_{J_k=0}^{n-r_k}\quad\sum_{J_{k-1}=0}^{n-r_{k-1}-J_k}\quad\sum_{J_{k-2}=0}^{n-r_{k-2}-J_k-J_{k-1}}\cdots\sum_{J_1=0}^{n-r_1-\sum_{i=2}^kJ_i}\\
    	&\quad\binom{n}{J_0,J_1,\ldots,J_k}\prod_{i=0}^{k}\Big(F\left(y_{(i+1)}^k\right)-F\left(y_{(i)}^k\right)\Big)^{J_i},\\
    	&s.t.\quad\sum_{i=0}^k J_i = n.
    \end{align*}
\end{proof}
\begin{proof}[\bf Proof 2: Combinatorics]
First, we note
\begin{equation*}
	X_{(r)}^n\le y\iff r\le\sum_{i=1}^n\1_{(-\infty,y]}(X_i),
\end{equation*}
and so,
\begin{align}
	&F_{(r_1,\ldots,r_k)}^n(y_1,\ldots,y_k)\notag\\
	&=\P\left[X_{(1)}^n\le y_1,\ldots,X_{(k)}^n\le y_k\right]\notag\\
	&=\P\left[r_1\le\sum_{i=1}^n\1_{(-\infty,y_1]}(X_i)\,,\,\cdots\,,\, r_k\le\sum_{i=1}^n\1_{(-\infty,y_k]}(X_i)\right].\label{eq: P1}
\end{align}
Here, we consider the following intervals
\begin{center}
	$\; U_0\hspace{1.5cm} 
	U_1\hspace{1.4cm}
	\cdots\hspace{1.5cm}
	U_{k-1}\hspace{1.6cm}
	U_k$\\
	$-\infty=y_0$]---------[
	$y_1$]---------[$y_2
	\quad\cdots\quad 
	y_{k-1}$]---------[
	$y_k$]---------[$y_{k+1}=\infty$,
\end{center}
and denote
\begin{equation*}
	\#_k\;:=\;\#\{ i|X_i\in U_k\}\,=\,\sum_{i=1}^n\1_{]y_k,y_{k+1}[}(X_i),
\end{equation*}
where $\#$ denotes the cardinality of the set. Then, we have
\[r_k\le\sum_{i=1}^n\1_{(-\infty,y_k]}(X_i)\iff 0\le \#_k\le n-r_k.\]
Thus, equation \eqref{eq: P1} can be rewritten as follows.
\begin{align}		  
	&=\P\left[r_1\le\sum_{i=1}^n\1_{(-\infty,y_1]}(X_i)\,,\,\cdots\,,\, r_{k-1}\le\sum_{i=1}^n\1_{(-\infty,y_{k-1}]}(X_i),\, 0\le \#_k\le n-r_k\right]\notag\\
	&=\sum_{J_k=0}^{n-r_k}\P\left[r_1\le\sum_{i=1}^n\1_{(-\infty,y_1]}(X_i)\,,\,\cdots\,,\, r_{k-1}\le\sum_{i=1}^n\1_{(-\infty,y_{k-1}]}(X_i)\Big|\#_k=J_k\right]\notag\\
	&\times\P[\#_k=J_k]\notag\\	
	&=\sum_{J_k=0}^{n-r_k}\P\left[r_1\le\sum_{i=1}^n\1_{(-\infty,y_1]}(X_i)\,,\,\cdots\,,\, r_{k-1}\le\sum_{i=1}^n\1_{(-\infty,y_{k-1}]}(X_i)\Big|\#_k=J_k\right]\notag\\
	&\times\binom{n}{J_k}\Big(F\left(y_{k+1}\right)-F\left(y_k\right)\Big)^{J_k}.\label{eq: P2}
\end{align}
By denoting
\begin{equation*}
	\#_{k-1}:=\#\{ i|X_i\in U_{k-1}\}=\sum_{i=1}^n\1_{]y_{k-1},y_k[}(X_i),
\end{equation*}
we can continue \eqref{eq: P2} as follows.
\begin{align*}		  
	&=\sum_{J_k=0}^{n-r_k}\quad\sum_{J_{k-1}=0}^{n-J_k-r_{k-1}}\\
	&\P\left[r_1\le\sum_{i=1}^n\1_{(-\infty,y_1]}(X_i)\,,\,\cdots\,,\, r_{k-2}\le\sum_{i=1}^n\1_{(-\infty,y_{k-2}]}(X_i)\Big|\#_{k-1}=J_{k-1},\#_k=J_k\right]\notag\\
	&\times\P[\#_{k-1}=J_{k-1}\,|\,\#_k=J_k]\cdot\P[\#_k=J_k]\notag\\	
	&=\sum_{J_k=0}^{n-r_k}\sum_{J_{k-1}=0}^{n-J_k-r_{k-1}}\\
	&\P\left[r_1\le\sum_{i=1}^n\1_{(-\infty,y_1]}(X_i)\,,\,\cdots\,,\, r_{k-2}\le\sum_{i=1}^n\1_{(-\infty,y_{k-2}]}(X_i)\Big|\#_{k-1}=J_{k-1},\#_k=J_k\right]\notag\\
    &\times\binom{n}{J_k}\binom{n-J_k}{J_{k-1}}\Big(F\left(y_{k+1}\right)-F\left(y_k\right)\Big)^{J_k}\Big(F\left(y_k\right)-F\left(y_{k-1}\right)\Big)^{J_{k-1}}.
\end{align*}
By continuing this process, we obtain
\begin{align*}
	=&\sum_{J_k=0}^{n-r_k}\quad\sum_{J_{k-1}=0}^{n-r_{k-1}-J_k}\quad\sum_{J_{k-2}=0}^{n-r_{k-2}-J_k-J_{k-1}}\cdots\sum_{J_1=0}^{n-r_1-\sum_{i=2}^kJ_i}\\
	&\P[\#_1=J_1\,|\,\#_2=J_2,\ldots,\#_k=J_k]\\
	&\P[\#_2=J_2\,|\,\#_3=J_3,\ldots,\#_k=J_k]\\
	&\cdots\\
	&\P[\#_{k-1}=J_{k-1}\,|\,\#_k=J_k]\\
	&\P[\#_k=J_k]\\
	=&\sum_{J_k=0}^{n-r_k}\quad\sum_{J_{k-1}=0}^{n-r_{k-1}-J_k}\quad\sum_{J_{k-2}=0}^{n-r_{k-2}-J_k-J_{k-1}}\cdots\sum_{J_1=0}^{n-r_1-\sum_{i=2}^kJ_i}\\
	&\binom{n}{J_k}\binom{n-J_k}{J_{k-1}}\cdots\binom{n-\sum_{i=2}^{k}J_i}{J_1}\cdot\Big(F\left(y_1\right)-F\left(y_0\right)\Big)^{n-\sum_{i=1}^{k}J_i}\\
	&\times\Big(F\left(y_2\right)-F\left(y_1\right)\Big)^{J_1}\cdots\Big(F\left(y_{k+1}\right)-F\left(y_k\right)\Big)^{J_k},
\end{align*}
and by taking $J_0=n-\sum_{i=1}^{k}J_i$, this returns \eqref{eq: P0}.
\end{proof}
\begin{cor}
	Given the assumptions and notations in Theorem \ref{thm: CDF_Ordered}, if $x_1\le\cdots\le x_k$, then
	\begin{align}
		&F_{(r_1,\ldots,r_k)}^n(x_1,\ldots,x_k)\\ 
		&=\sum_{J_k=0}^{n-r_k}\quad\sum_{J_{k-1}=0}^{n-r_{k-1}-J_k}\quad\sum_{J_{k-2}=0}^{n-r_{k-2}-J_k-J_{k-1}}\cdots\sum_{J_1=0}^{n-r_1-\sum_{i=2}^kJ_i}\notag\\
		&\quad\binom{n}{J_0,J_1,\ldots,J_k}\prod_{i=0}^{k}\big(F\left(x_{i+1}\right)-F\left(x_i\right)\big)^{J_i},\notag\\
		&s.t.\quad\sum_{i=0}^k J_i = n.\notag
	\end{align}
\end{cor}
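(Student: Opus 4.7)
The plan is to treat this corollary as an immediate specialization of Theorem \ref{thm: CDF_Ordered}. The theorem already supplies the closed-form CDF for arbitrary real arguments $(x_1,\ldots,x_k)$, but it is expressed in terms of the sorted values $x_{(1)}^k\le\cdots\le x_{(k)}^k$ (together with the boundary conventions $F(x_{(0)}^k)=0$ and $F(x_{(k+1)}^k)=1$). The corollary restricts attention to inputs that are already in ascending order, in which case the sorting operation is trivial, and the formula simplifies by replacing each $x_{(i)}^k$ by $x_i$.

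Concretely, I would proceed in two short steps. First, I would invoke Theorem \ref{thm: CDF_Ordered} at the point $(x_1,\ldots,x_k)$ to write
\[
F_{(r_1,\ldots,r_k)}^n(x_1,\ldots,x_k)
=\sum_{J_k=0}^{n-r_k}\cdots\sum_{J_1=0}^{n-r_1-\sum_{i=2}^kJ_i}\binom{n}{J_0,\ldots,J_k}\prod_{i=0}^{k}\Big(F\bigl(x_{(i+1)}^k\bigr)-F\bigl(x_{(i)}^k\bigr)\Big)^{J_i},
\]
subject to $\sum_{i=0}^k J_i=n$. Second, I would observe that under the hypothesis $x_1\le\cdots\le x_k$, the definition of order statistics forces $x_{(i)}^k=x_i$ for each $i=1,\ldots,k$. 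Combined with the boundary conventions $x_{(0)}^k=x_0$ and $x_{(k+1)}^k=x_{k+1}$ (with $F(x_0)=0$ and $F(x_{k+1})=1$), every increment $F(x_{(i+1)}^k)-F(x_{(i)}^k)$ collapses to $F(x_{i+1})-F(x_i)$, yielding exactly the desired formula.

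There is no essential obstacle here: the corollary is a notational streamlining of Theorem \ref{thm: CDF_Ordered} tailored for the common situation in which the evaluation points are presented in nondecreasing order. The only point worth noting explicitly in the write-up is that the boundary terms $i=0$ and $i=k$ in the product also behave correctly, since the conventions $F(x_0)=0$ and $F(x_{k+1})=1$ are preserved under the identification $x_{(0)}^k=x_0$, $x_{(k+1)}^k=x_{k+1}$.
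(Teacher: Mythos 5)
Your proposal is correct and matches the paper's (implicit) reasoning exactly: the paper states this corollary without proof as an immediate specialization of Theorem \ref{thm: CDF_Ordered}, and your observation that $x_{(i)}^k=x_i$ when the arguments are already nondecreasing, together with the boundary conventions $F(x_0)=0$ and $F(x_{k+1})=1$, is precisely the substitution that justifies it.
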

\section{Exact Distribution of Tail-Based Statistics}\label{sec:exact.dist}
Here, we apply the Corollary \ref{cor: fnn} to investigate the exact cumulative distribution of $\Lambda_n(p)$. We use a.s. to denote almost sure convergence.
\begin{pro}\label{prop: F_lambda}
Let $p\in(0,1)$ and $0<|\lambda|<1$. If the random variables $X_1,X_2,\ldots,X_n$ are i.i.d with common absolutely continuous distribution $F$, and the almost everywhere positive density function $f$, then for some $N_0\ge 1$, the cumulative distribution function of $\Lambda_n(p), n\ge N_0$ is
\begin{itemize}
\item[(i)] If $\lambda\E[X]>0$, then
\begin{align}\label{eq: FL_1}
&F_{\Lambda_n(p)}(\lambda) = 1 - n!\int_0^1\int_0^{u_n}\cdots\int_0^{u_3}\notag\\
&F\left[\left(\frac{1-\lambda}{\lambda}\right)\sum_{i=\ceil{np}}^nF^{-1}(u_i)-\sum_{i=2}^{\ceil{np}-1}F^{-1}(u_i)\right]\,du_2\cdots du_n.
\end{align}
\item[(ii)] If $\lambda\E[X]<0$, then
\begin{align}\label{eq: FL_2}
	&F_{\Lambda_n(p)}(\lambda) = n!\int_0^1\int_0^{u_n}\cdots\int_0^{u_3}\notag\\
	&F\left[\left(\frac{1-\lambda}{\lambda}\right)\sum_{i=\ceil{np}}^nF^{-1}(u_i)-\sum_{i=2}^{\ceil{np}-1}F^{-1}(u_i)\right]\,du_2\cdots du_n.
\end{align}
\end{itemize}
\end{pro}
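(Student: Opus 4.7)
The plan is to compute $F_{\Lambda_n(p)}(\lambda)=\P[\Lambda_n(p)\le\lambda]$ from the joint density of the order statistics given by Corollary \ref{cor: fnn}, pull back to the uniform order statistics via the probability integral transform, and integrate out the smallest variable $u_1$ in closed form. Since $f$ is a.e.\ positive, $F$ is strictly increasing, and the substitution $u_i=F(x_i)$ turns the density $n!\prod_{i=1}^n f(x_i)$ on $\{x_1<\cdots<x_n\}$ into the uniform density on the simplex $\{0<u_1<\cdots<u_n<1\}$, with $X_{(i)}^n=F^{-1}(u_i)$, so
\begin{equation*}
F_{\Lambda_n(p)}(\lambda)=n!\int_{0<u_1<\cdots<u_n<1}\1\{\Lambda_n(p)\le\lambda\}\,du_1\cdots du_n.
\end{equation*}

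Next I rearrange the event. Writing $S_n=\sum_{i=1}^n X_{(i)}^n$ and $T_n=\sum_{i=\ceil{np}}^n X_{(i)}^n$, the inequality $T_n/S_n\le\lambda$ becomes $T_n\le\lambda S_n$ or $T_n\ge\lambda S_n$ depending on the sign of $S_n$. By the strong law of large numbers $S_n/n\to\E[X]$ a.s., so for all $n$ greater than some threshold $N_0$ the sign of $S_n$ almost surely agrees with the sign of $\E[X]$. Combined with the sign of $\lambda$ (which may reverse the inequality when one divides through), the product $\lambda\E[X]$ naturally selects the two cases of the proposition: the event reduces to $X_{(1)}^n\ge A_n$ in case (i) and to $X_{(1)}^n\le A_n$ in case (ii), where
\begin{equation*}
A_n=\frac{1-\lambda}{\lambda}\sum_{i=\ceil{np}}^n F^{-1}(u_i)-\sum_{i=2}^{\ceil{np}-1}F^{-1}(u_i)
\end{equation*}
is a function of $u_2,\ldots,u_n$ only. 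Equivalently, the event is $u_1\ge F(A_n)$ in case (i) and $u_1\le F(A_n)$ in case (ii).

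I then perform the $u_1$-integral over $(0,u_2)$. In case (ii) it collapses to $F(A_n)$ and the remaining iterated integral is exactly \eqref{eq: FL_2}. In case (i) I instead compute the complement $\P[\Lambda_n(p)>\lambda]$ by the same reduction; the inner $u_1$-integral again produces $F(A_n)$, and $F_{\Lambda_n(p)}(\lambda)=1-\P[\Lambda_n(p)>\lambda]$ delivers \eqref{eq: FL_1}.

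The main obstacle I anticipate is the inner integration itself: strictly, $\int_0^{u_2}\1[u_1\le F(A_n)]\,du_1=\min(u_2,F(A_n))$ (and similarly with $\ge$), not simply $F(A_n)$, so the displayed formulas tacitly require $F(A_n)\le u_2$ on the effective region. The role of $N_0$ is precisely to enforce this via SLLN-type concentration: for $n\ge N_0$ both $T_n/n$ and $S_n/n$ sit close to their deterministic limits, pinning $A_n/n$ down and forcing the boundary region $\{F(A_n)>u_2\}$ to have vanishing measure. Quantifying this boundary contribution and choosing $N_0$ so that the reduction becomes an identity is the delicate piece to fill in before the symbolic calculation above yields \eqref{eq: FL_1} and \eqref{eq: FL_2} as stated.
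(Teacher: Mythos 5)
Your argument follows exactly the same route as the paper's proof: integrate the joint order-statistic density of Corollary \ref{cor: fnn} over the event, pass to uniform order statistics via $u_i=F(x_i)$, rewrite $\{\Lambda_n(p)\le\lambda\}$ as a one-sided constraint $X_{(1)}^n\gtrless A_n$ whose direction is determined by the sign of the denominator (fixed for $n\ge N_0$ via the SLLN) together with the sign of $\lambda$, and then integrate out $u_1$. Your case analysis agrees with the paper's \eqref{eq: ineq_lambda} and \eqref{eq: ineq_lambda2}, and the one step you single out as problematic is indeed the crux.

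However, the resolution you defer to cannot be carried out, and you should be aware that the paper's own proof does not resolve it either --- it silently writes the innermost integral as $\int_{A_n}^{x_2}f(x_1)\,dx_1=F(x_2)-F(A_n)$ with no clipping at $x_2$, which is exactly the gap you identified. The exact inner integral is $(u_2-F(A_n))^{+}$, and the offending set $\{F(A_n)>u_2\}$, i.e.\ $\{A_n>x_2\}$, is not asymptotically negligible: unwinding the algebra, it is (up to replacing $x_1$ by $x_2$) essentially the event $\{\Lambda_n(p)>\lambda\}$, whose probability tends to $1$ whenever $\lambda<a_{q_p}/\mu$. The $N_0$ supplied by the SLLN only controls the sign of $\sum_i X_i$ and has no bearing on this region, so no choice of $N_0$ turns the reduction into an identity. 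A concrete failure: for $n=2$, $p=0.6$, $\lambda=0.4$ and $X_i\sim\mathrm{Exp}(1)$ one has $\Lambda_2(p)=X_{(2)}^2/(X_1+X_2)\ge 1/2$ almost surely, so $F_{\Lambda_2(p)}(0.4)=0$, while the right-hand side of \eqref{eq: FL_1} evaluates to $1-2\int_0^1\bigl(1-(1-u)^{3/2}\bigr)\,du=-1/5$; smoothing this example (e.g.\ $X_i\sim\mathcal{N}(\mu,1)$ with $\mu$ large) keeps the density a.e.\ positive and the failure intact. The correct closed form requires $\min\bigl(u_2,F[\cdots]\bigr)$ in place of $F[\cdots]$ inside \eqref{eq: FL_1} and \eqref{eq: FL_2}; without that correction your proof, like the paper's, does not close.
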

\begin{proof}
For (i), let $0<\lambda, \E[X]<1$. Then by the strong low of larg numbers (LLN) $\sum_{i=1}^n X_i/n\overset{a.s.}{\longrightarrow}\E[X]$ and so, there are some $N_0\ge 1$ that for all $n\ge N_0$ we have $\sum_{i=1}^n X_i>0$. Next we have
\begin{gather}
\Lambda_n(p)\le\lambda\notag\\
\iff\frac{\sum_{i=\ceil{np}}^n X_{(i)}^n}{\sum_{i=1}^n X_i}\le\lambda\notag\\
\iff\sum_{i=\ceil{np}}^n X_{(i)}^n\le\lambda\sum_{i=1}^n X_i\notag\\
\iff\lambda\sum_{i=1}^{\ceil{np}-1} X_{(i)}^n\, -\, (1-\lambda)\sum_{i=\ceil{np}}^n X_{(i)}^n\, \ge\, 0\notag\\
\iff X_{(1)}^n\ge \left(\frac{1-\lambda}{\lambda}\right)\sum_{i=\ceil{np}}^n X_{(i)}^n-\sum_{i=2}^{\ceil{np}-1} X_{(i)}^n.\label{eq: ineq_lambda}
\end{gather}
Now, by denoting
$$
D_n(\lambda,p):=\left\{\bm{x}=(x_1,\ldots,x_n)\,\Bigg|\,
\begin{matrix}
\left(\frac{1-\lambda}{\lambda}\right)\sum_{i=\ceil{np}}^n x_i-\sum_{i=2}^{\ceil{np}-1} x_i\le x_1,\\
x_1<x_2<\cdots<x_n	
\end{matrix}
\right\}
\subset\R^n,
$$
from Corollary \ref{cor: fnn}, one can write
\begin{align*}
F_{\Lambda_n(p)}(\lambda)
&=\P[\Lambda_n(p)\le\lambda]\\
&=\P\left[\lambda\sum_{i=1}^{\ceil{np}-1} X_{(i)}^n\, -\, (1-\lambda)\sum_{i=\ceil{np}}^n X_{(i)}^n\, \ge\, 0\right]\\
&=\int_{D_n(\lambda,p)} f^n_{(1,\ldots,n)}(\bm{x})\, d\bm{x}\\
&=n!\underset{D_n(\lambda,p)}{\idotsint} \left(\prod_{i=1}^n f(x_i)\right)\,dx_1\cdots dx_n\\
&=n!\int_{-\infty}^{\infty}
\int_{-\infty}^{x_n}
\cdots
\int_{-\infty}^{x_3}
\int_{\left(\frac{1-\lambda}{\lambda}\right)\sum_{i=\ceil{np}}^n x_i-\sum_{i=2}^{\ceil{np}-1} x_i}^{x_2}\\
&\qquad\left(\prod_{i=1}^n f(x_i)\right)\,dx_1\cdots dx_n.
\end{align*}
Then, since $F$ is almost everywhere differentiable and invertible, by changing the variables $u_i=F(x_i)$ or $x_i=F^{-1}(u_i)$, for every $i=1,\ldots,n$ we have
\begin{align*}
&=n!\int_0^1
\int_0^{u_n}
\cdots
\int_0^{u_3}
\int_{F\left[\left(\frac{1-\lambda}{\lambda}\right)\sum_{i=\ceil{np}}^n F^{-1}(u_i)-\sum_{i=2}^{\ceil{np}-1} F^{-1}(u_i)\right]}^{u_2}\,du_1\cdots du_n\\
&=n!\int_0^1
\int_0^{u_n}
\cdots
\int_0^{u_3}
\int_0^{u_2}\,du_1\cdots du_n\\
&-n!\int_0^1
\int_0^{u_n}
\cdots
\int_0^{u_3}
\int_0^{F\left[\left(\frac{1-\lambda}{\lambda}\right)\sum_{i=\ceil{np}}^n F^{-1}(u_i)-\sum_{i=2}^{\ceil{np}-1} F^{-1}(u_i)\right]}\,du_1\cdots du_n,
\end{align*}
and this yields \eqref{eq: FL_1}. If $-1<\lambda, \E[X]<0$, then similarly \eqref{eq: ineq_lambda} is valid and so we have the same result.

For (ii), let $\lambda>0, \E[X]<0$. Similar to the proof of (i), there are some $N_0\ge 1$ that for all $n\ge N_0$ we have $\sum_{i=1}^n X_i<0$. Next, for this case, one can see
\begin{equation}\label{eq: ineq_lambda2}
\Lambda_n(p)\ge\lambda
\iff
X_{(1)}^n\ge \left(\frac{1-\lambda}{\lambda}\right)\sum_{i=\ceil{np}}^n X_{(i)}^n-\sum_{i=2}^{\ceil{np}-1} X_{(i)}^n,
\end{equation}
and so, in this case we have
\begin{align*}
F_{\Lambda_n(p)}(\lambda)
&=\P[\Lambda_n(p)\le\lambda]\\
&=1-\P[\Lambda_n(p)\ge\lambda]\\
&=1-\P\left[X_{(1)}^n\ge \left(\frac{1-\lambda}{\lambda}\right)\sum_{i=\ceil{np}}^n X_{(i)}^n-\sum_{i=2}^{\ceil{np}-1} X_{(i)}^n\right]\\
&=1-\int_{D_n(\lambda,p)} f^n_{(1,\ldots,n)}(\bm{x})\, d\bm{x}.
\end{align*}
Now, proceeding with similar calculation of part (i), from this final integral we have \eqref{eq: FL_2}. If $\lambda<0, \E[X]>0$, then similarly \eqref{eq: ineq_lambda2} is valid, and so, we have the same result.
\end{proof}
\section{Convergence of Tail-Based Statistics}\label{sec:convergence}
As the explicit form of the exact distribution functions \eqref{eq: FL_1} and \eqref{eq: FL_2} include multiple integrals, they are not computationally suitable for larg nambers of $n$. So, we need to investigate further for the asymptotic behavior and distribution here.
\begin{lmm}\label{lmm: topfrac}
	Let $X_1,X_2,\ldots,X_n$ be i.i.d random variables with common absolutely continuous distribution $F$. Then, for all $p\in(0,1)$ that $F$ is continuous at its $p$th quantile $q_p$, we have
	\begin{equation}
		\frac{1}{n}\sum_{i=1}^{n}X_i\1_{\{ X_i\ge Q_n(p)\}}
		\overset{a.s.}{\longrightarrow}
		\E\left[X_1\1_{\{ X_1\ge q_p\}}\right],
	\end{equation}
	where $Q_n(p)$ is the $p$th quantile of $\{ X_i\}_{i=1}^n$.
\end{lmm}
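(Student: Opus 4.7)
The plan is to compare the empirical statistic against its idealized counterpart, in which the random threshold $Q_n(p)$ is replaced by the deterministic population quantile $q_p$. Writing
$$\frac{1}{n}\sum_{i=1}^{n}X_i\1_{\{X_i\ge Q_n(p)\}} \;=\; \frac{1}{n}\sum_{i=1}^{n}X_i\1_{\{X_i\ge q_p\}} \;+\; R_n,$$
with $R_n := \frac{1}{n}\sum_{i=1}^n X_i\bigl(\1_{\{X_i\ge Q_n(p)\}} - \1_{\{X_i\ge q_p\}}\bigr)$, the first summand converges almost surely to $\E[X_1\1_{\{X_1\ge q_p\}}]$ directly by the strong law of large numbers (under the integrability implicit in the statement, namely $\E|X_1\1_{\{X_1\ge q_p\}}|<\infty$). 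The entire proof therefore reduces to establishing $R_n\overset{a.s.}{\longrightarrow} 0$.

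For this, I would first invoke the classical fact that continuity of $F$ at $q_p$ forces $Q_n(p)\overset{a.s.}{\longrightarrow} q_p$; this is an immediate consequence of the Glivenko--Cantelli theorem (or of the SLLN applied to the indicators $\1_{\{X_i\le q_p\pm\epsilon\}}$ combined with the continuity of $F$ at $q_p$, which precludes a flat stretch of $F$ across the level $p$). Fix $\epsilon>0$. Almost surely, for all sufficiently large $n$ one has $Q_n(p)\in(q_p-\epsilon,q_p+\epsilon)$, and on this event the signed indicator $\1_{\{X_i\ge Q_n(p)\}}-\1_{\{X_i\ge q_p\}}$ is supported in $\{X_i\in[q_p-\epsilon,q_p+\epsilon]\}$. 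This yields the sandwich bound
$$|R_n|\;\le\;\frac{1}{n}\sum_{i=1}^n |X_i|\,\1_{\{|X_i-q_p|\le\epsilon\}}\qquad\text{eventually.}$$

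The right-hand side is a nonnegative bounded random variable (dominated by $|q_p|+\epsilon$), so a second application of the SLLN gives a.s.\ convergence to $\E\bigl[|X_1|\,\1_{\{|X_1-q_p|\le\epsilon\}}\bigr]$. Letting $\epsilon\downarrow 0$ along a countable sequence and using dominated convergence together with the continuity hypothesis $\P[X_1=q_p]=0$, this limiting expectation tends to zero. Hence $\limsup_{n\to\infty}|R_n|=0$ almost surely, as required. The main technical hurdle is the interplay between the random threshold $Q_n(p)$ and the discontinuity of the indicator at $q_p$; the sandwich step above is precisely what resolves it by confining the mismatched contributions to a shrinking neighborhood of $q_p$, where the continuity hypothesis causes the mass to vanish. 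A secondary subtlety worth flagging at the start of the formal write-up is the integrability needed to apply the SLLN to the idealized term, which must be read off from the finiteness of $\E[X_1\1_{\{X_1\ge q_p\}}]$.
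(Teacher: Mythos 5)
Your proof is correct and follows essentially the same route as the paper: the same decomposition into the idealized sum $\frac{1}{n}\sum_i X_i\1_{\{X_i\ge q_p\}}$ plus a remainder, the same use of $Q_n(p)\overset{a.s.}{\longrightarrow}q_p$ to confine the mismatched indicators to an $\varepsilon$-neighborhood of $q_p$, and the same SLLN sandwich on $\frac{1}{n}\sum_i|X_i|\1_{\{|X_i-q_p|\le\varepsilon\}}$. The only difference is the final step: the paper bounds $\E[|X_1|\,\1_{(q_p-\varepsilon,q_p+\varepsilon)}(X_1)]$ by $2\varepsilon M$ via the density, whereas you send $\varepsilon\downarrow 0$ along a countable sequence using dominated convergence and $\P[X_1=q_p]=0$ --- which is, if anything, slightly cleaner since it avoids assuming the density is bounded near $q_p$.
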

\begin{proof}
	Let
	\begin{align*}
		U_n(p) &= \frac{1}{n}\sum_{i=1}^{n}X_i\1_{\{ X_i\ge Q_n(p)\}},\\
		V_n(p) &= \frac{1}{n}\sum_{i=1}^{n}X_i\1_{\{ X_i\ge q_p\}}.
	\end{align*}
	By the strong LLN we have
	\begin{equation}\label{eq: Vn_Converge}
		V_n(p)
		\overset{a.s.}{\longrightarrow}
		\E\left[X_1\1_{\{ X_1\ge q_p\}}\right].
	\end{equation}
	On the other hand, it is shown in \cite{fabian1985introduction} that, given the continuity of $F$ on $q_p$, we have
	\begin{equation}\label{eq: Qnq}
		Q_n(p)\overset{a.s.}{\longrightarrow}q_p.
	\end{equation}
	Next, we can write
	\begin{align*}
		|U_n(p)-V_n(p)| 
		&\le \frac{1}{n}\sum_{i=1}^{n}|X_i|\cdot|\1_{\{ X_i\ge Q_n(p)\}} - \1_{\{ X_i\ge q_p\}}|\\
		&= \frac{1}{n}\sum_{i=1}^{n}|X_i|\,\1_{\{ Q_n(p)\wedge q_p\le X_i< Q_n(p)\vee q_p\}}\\
		&= \frac{1}{n}\sum_{i=1}^{n}|X_i|\,\1_{\{ a_n(p)\le X_i< b_n(p)\}},
	\end{align*}
	where $\wedge$ and $\vee$ are respectively minimum and maximum. Here, from \eqref{eq: Qnq}, for both $a_n(p)=Q_n(p)\wedge q_p$ and $b_n(p)=Q_n(p)\vee q_p$ we have
	\begin{equation*}
		a_n(p), b_n(p)\overset{a.s.}{\longrightarrow}q_p.
	\end{equation*}
	So, almost surely, for all arbitrary $\varepsilon>0$, there are some $N_1^\varepsilon(p)>0$ that for all $n\ge N_1^\varepsilon(p)$, we have
	\begin{equation*}
		\1_{[a_n(p),b_n(p))}(x)=0,\qquad \forall x\in\R\setminus(q_p-\varepsilon,q_p+\varepsilon),
	\end{equation*}
	and so,
	\begin{equation*}
		\1_{[a_n(p),b_n(p))}(x)\le\1_{(q_p-\varepsilon\,,\,q_p+\varepsilon)}(x).
	\end{equation*}
	Hence, for $i\ge 1$
	\begin{equation*}
		\1_{[a_n(p),b_n(p))}(X_i)\le\1_{(q_p-\varepsilon\,,\,q_p+\varepsilon)}(X_i).
	\end{equation*}
	So, for $n\ge N_1^\varepsilon(p)$, we have
	\begin{align*}
		|U_n(p) - V_n(p)| 
		&\le \frac{1}{n}\sum_{i=1}^{n}|X_i|\,\1_{\{ [a_n(p),b_n(p))\}}(X_i)\\
		&\le \frac{1}{n}\sum_{i=1}^{n}|X_i|\,\1_{(q_p-\varepsilon\,,\,q_p+\varepsilon)}(X_i).
	\end{align*}
	Again, by the strong LLN
	\begin{equation*}
		\frac{1}{n}\sum_{i=1}^{n}|X_i|\,\1_{(q_p-\varepsilon\,,\,q_p+\varepsilon)}(X_i)
		\overset{a.s.}{\longrightarrow}
		\E\left[|X_1|\, \1_{(q_p-\varepsilon\,,\,q_p+\varepsilon)}(X_1)\right].
	\end{equation*}
	That is, almost surely, for all $\varepsilon>0$, there are some $N_2^\varepsilon(p)>0$ that for all $n\ge N_2^\varepsilon(p)$, we have
	\begin{align*}
		|U_n(p) - V_n(p)| &\le \E\left[|X_1|\, \1_{(q_p-\varepsilon\,,\,q_p+\varepsilon)}(X_1)\right]\\
		&=\int_{-\varepsilon}^{\varepsilon}|q_p-x|\cdot f(q_p-x)\,dx\\
		&\le 2\varepsilon\max_{(-\varepsilon,\varepsilon)} |q_p-x|\cdot f(q_p-x)\\
		&\le 2\varepsilon M,
	\end{align*}
	where $f$ is the Radon-Nikodym derivative of $F$, i.e., the probability density function of $X_1$. We note, as $f$ is integrable in an interval $(q_p-\ell,q_p+\ell)$ around $x=q_p$, we have $M=\max_{(-\ell,\ell)} |q_p-x|\cdot f(q_p-x)<\infty$. So,
	\begin{equation}\label{eq: Cauchy-as}
		|U_n(p) - V_n(p)|
		\overset{a.s.}{\longrightarrow}
		0.
	\end{equation}
	Now, \eqref{eq: Vn_Converge} and \eqref{eq: Cauchy-as} prove the Theorem since the intersection of two events, each with probability 1, also has probability 1.
\end{proof}
\begin{cor}\label{cor: UVas}
	By the assumptions and notations of the Lemma \ref{lmm: topfrac} and its proof, for $n\to\infty$ we have   
	$
	U_n(p) = \frac{1}{n}\sum_{i=1}^{n}X_i\1_{\{ X_i\ge Q_n(p)\}}
	$ 
	is almost surely (a.s.) close to the process
	$
	V_n(p) = \frac{1}{n}\sum_{i=1}^{n}X_i\1_{\{ X_i\ge q_p\}}
	$.
\end{cor}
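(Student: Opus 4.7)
The plan is to recognize that Corollary \ref{cor: UVas} is essentially a restatement of the intermediate conclusion \eqref{eq: Cauchy-as} already established within the proof of Lemma \ref{lmm: topfrac}. The informal phrase ``almost surely close'' is made rigorous by $|U_n(p) - V_n(p)| \overset{a.s.}{\longrightarrow} 0$, which is precisely the content of \eqref{eq: Cauchy-as}; so the shortest proof is a one-line appeal to that intermediate step.

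For a self-contained presentation, I would repeat the three-step argument from the lemma. First, I would bound $|U_n(p) - V_n(p)| \le \frac{1}{n}\sum_{i=1}^n |X_i|\,\1_{\{a_n(p) \le X_i < b_n(p)\}}$, where $a_n(p) = Q_n(p) \wedge q_p$ and $b_n(p) = Q_n(p) \vee q_p$, since the two indicators $\1_{\{X_i \ge Q_n(p)\}}$ and $\1_{\{X_i \ge q_p\}}$ differ only on $[a_n(p), b_n(p))$. Second, I would invoke the almost sure convergence $Q_n(p) \to q_p$ (valid under continuity of $F$ at $q_p$) to conclude that, for any fixed $\varepsilon > 0$, eventually $[a_n(p), b_n(p)) \subset (q_p - \varepsilon, q_p + \varepsilon)$, so the bound is dominated by the truncated mean $\frac{1}{n}\sum_{i=1}^n |X_i|\,\1_{(q_p - \varepsilon, q_p + \varepsilon)}(X_i)$. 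Third, I would apply the strong LLN to this truncated mean, obtaining an almost sure limit $\E[|X_1|\,\1_{(q_p - \varepsilon, q_p + \varepsilon)}(X_1)]$, which is $O(\varepsilon)$ by local boundedness of $|x|f(x)$ near $q_p$, then let $\varepsilon \downarrow 0$.

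The main (mild) subtlety is bookkeeping: one must ensure that the countable intersection of the relevant full-probability events---namely the quantile convergence $Q_n(p) \to q_p$ together with the LLNs corresponding to each $\varepsilon$ in a chosen null sequence---remains of probability one, which is automatic. There is no substantive obstacle, since every step already appears within the proof of Lemma \ref{lmm: topfrac}; the corollary merely isolates and names the relation proved en route.
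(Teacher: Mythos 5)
Your proposal is correct and matches the paper exactly: the corollary carries no separate proof and is precisely the relation \eqref{eq: Cauchy-as}, $|U_n(p)-V_n(p)|\overset{a.s.}{\longrightarrow}0$, isolated from the proof of Lemma \ref{lmm: topfrac}, and your self-contained three-step recapitulation (indicator-difference bound on $[a_n(p),b_n(p))$, almost sure quantile convergence, strong LLN on the truncated mean followed by $\varepsilon\downarrow 0$) is the same argument the paper uses there. No gaps.
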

Next, we note
\begin{equation}\label{eq: LQn}
	\Lambda_n(p)=\frac{\frac{1}{n}\sum_{i=1}^{n}X_i\1_{\{ X_i\ge Q_n(p)\}}}{\frac{1}{n}\sum_{i=1}^{n}X_i},
\end{equation}
and so, we have the following theorem as a straightforward consequence of the Lemma \ref{lmm: topfrac} and the strong LLN result that 
$\frac{1}{n}\sum_{i=1}^{n}X_i
\overset{a.s.}{\longrightarrow}
\mu
$
.
\begin{thm}\label{thm: LambdaConverg}
	Let $X_1,X_2,\ldots,X_n$ be i.i.d random variables with common absolutely continuous distribution $F$, and $\mu=\E[X_1]\neq 0$. Then, for all $p\in(0,1)$ that $F$ is continuous at its $p$th quantile $q_p$, we have
	\begin{equation}\label{eq: asconvergence}
		\Lambda_n(p)
		\overset{a.s.}{\longrightarrow}
		\frac{a_{q_p}}{\mu},
	\end{equation}
	where $a_{q_p} = \E[X_1\,\1_{\{ X_1\ge q_p\}}]$.
\end{thm}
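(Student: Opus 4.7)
The plan is to read off the theorem directly from the ratio representation in equation \eqref{eq: LQn}, treating numerator and denominator separately and then combining via the almost sure analogue of Slutsky's lemma.

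First I would recall the representation
\[
\Lambda_n(p) \;=\; \frac{\frac{1}{n}\sum_{i=1}^{n}X_i\,\1_{\{X_i\ge Q_n(p)\}}}{\frac{1}{n}\sum_{i=1}^{n}X_i},
\]
and handle the two pieces independently. For the numerator, I would invoke Lemma \ref{lmm: topfrac}, which under continuity of $F$ at $q_p$ gives
\[
\frac{1}{n}\sum_{i=1}^{n}X_i\,\1_{\{X_i\ge Q_n(p)\}} \;\overset{a.s.}{\longrightarrow}\; \E\!\left[X_1\,\1_{\{X_1\ge q_p\}}\right] \;=\; a_{q_p}.
\]
For the denominator, I would quote Kolmogorov's strong law of large numbers to obtain
\[
\frac{1}{n}\sum_{i=1}^{n}X_i \;\overset{a.s.}{\longrightarrow}\; \mu.
\]

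Next I would combine these two almost sure limits. Since both events of convergence have probability one, their intersection also has probability one; on this intersection, the numerator sequence converges to $a_{q_p}$ and the denominator sequence converges to $\mu$. Because $\mu\neq 0$ by hypothesis, the denominator is eventually bounded away from zero (it keeps the sign of $\mu$ from some random index onward), which makes the ratio well defined for all sufficiently large $n$ and allows the quotient to pass to the limit. Hence $\Lambda_n(p)\overset{a.s.}{\longrightarrow} a_{q_p}/\mu$ as required.

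There is no real obstacle here, since both convergence results have already been established; the only point deserving a line of care is justifying that the denominator can be inverted almost surely for all large $n$, which is exactly why the assumption $\mu\neq 0$ is needed. Everything else is the straightforward arithmetic of almost sure limits.
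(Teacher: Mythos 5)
Your proposal matches the paper's argument exactly: the paper presents this theorem as a direct consequence of Lemma \ref{lmm: topfrac} for the numerator and the strong law of large numbers for the denominator, combined via the representation \eqref{eq: LQn}. Your extra remark about the denominator being eventually bounded away from zero (using $\mu\neq 0$) is a sensible point of care that the paper leaves implicit.
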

\section{Asymptotic Distribution of Numerator}\label{sect: Asymp.Dist.Num}
Considering the explicit form of $\Lambda_n(p)$ given by equation \eqref{eq: LQn}, it has a ratio distribution for large $n\to\infty$. If $\{X_i\}_{i\ge1}$ are i.i.d with $\E[X_i]=\mu,\Var[X_i]=\sigma^2$, Then, by the central limit theorem (CLT), the denominator of the fraction converges to a normally distributed random variable. That is, for $n\to\infty$
\begin{equation*}
Z_n=\frac{1}{n}\sum_{i=1}^{n}X_i
\sim
\mathcal{N}(\mu,\sigma^2/n).
\end{equation*}
On the other hand, as $\{X_i\,\1_{\{X_i\ge Q_n(p)\}}\}_{i\ge1}$ are not independent random variables, the CLT is not applicable for the asymptotic distribution of the numerator of the fraction 
$U_n(p)$, 
even though it converges almost surely by the Lemma \ref{lmm: topfrac}. Considering the literature on ratio distributions and the multiple integral involved in the explicit form of the exact distribution functions, \eqref{eq: FL_1} and \eqref{eq: FL_2}, a close form of the $\Lambda_n(p)$ distribution is so complicated (case-dependent) to characterize in general for large $n\to\infty$.

To overcome these difficulties, the asymptotic distribution of $U_n(p)$ is required. To this, we apply the asymptotic distribution of $Q_n(p)$ and the law of total probability. The asymptotic normality of the distribution of $Q_n(p)$ for $n\to\infty$ was investigated by \cite{serfling2009approximation,bahadur1966note}
\begin{equation}\label{eq: Qn_asymp}
	Q_n(p)\sim\mathcal{N}\left(q_p\,,\,\frac{p(1-p)}{nf^2(q_p)}\right),\quad n\to\infty.
\end{equation}
So, applying this distribution, one can see
\[
f_{Q_n(p)}(q)\;=\; \frac{e^{-\frac{nf^2(q_p)}{2p(1-p)}(q-q_p)^2}}
{\sqrt{\frac{2\pi p(1-p)}{nf^2(q_p)}}}
\;\underset{n\to\infty}{\longrightarrow}\;
\delta_{q_p}(q).
\] 
While there are plenty studies for the ratio distributions of two Gaussian processes, the literatures for those ratios that numerator or denumerator are non-Gaussian are not that rich and also show sevear difficulties to have an explicit form of those ratio distribution. Then, a very straight forward question one may ask that is:
\begin{center}
	{\it ``Does $U_n$ have an asymptotic normality in distribution?''}
\end{center}
The following theorem reveals a positive response, and the fact behind it.
\begin{thm}\label{thm: asymp.norm}
	Let $X_1,X_2,\ldots,X_n$ be i.i.d square integrable random variables, i.e., $\E[X_1^2]<\infty$, with common distribution $F$ continuous at $q_p$. Then, for $n\to\infty$, the process $U_n$ admits the asymptotic normal distribution
	\begin{equation}\label{eq: UasympN}
     U_n(p)\,\sim\,\mathcal{N}\Big(a_{q_p}\,,\,\Big(
     (b^+_{q_p})^2 + (b^-_{q_p})^2 + 2a^+_{q_p}a^-_{q_p}
     \Big)\Big/ n\Big),
	\end{equation}
	where $a^+_{q_p},b^+_{q_p}$ are the expectation and standard deviation of $X_i^+\1_{\{X_i\geq q_p\}}$, and $a^-_{q_p},b^-_{q_p}$ are the expectation and standard deviation of $X_i^-\1_{\{X_i\geq q_p\}}$.
\end{thm}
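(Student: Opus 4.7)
The natural plan is to reduce the question to the classical i.i.d.\ CLT by comparing $U_n(p)$ with the analogue
$V_n(p) = \frac{1}{n}\sum_{i=1}^n X_i \1_{\{X_i \ge q_p\}}$
already introduced in Lemma~\ref{lmm: topfrac}, and then transferring the limiting distribution via the asymptotic normality \eqref{eq: Qn_asymp} of $Q_n(p)$ together with the law of total probability as the author hints.

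The first step would be to identify the target variance. Writing $X_i = X_i^+ - X_i^-$, the positive and negative parts have disjoint support, hence $X_i^+ X_i^- \equiv 0$ and the cross-expectation $\E[X_1^+ X_1^- \1_{\{X_1\ge q_p\}}]$ vanishes. Therefore
$\Cov(X_1^+\1_{\{X_1\ge q_p\}},\, X_1^-\1_{\{X_1\ge q_p\}}) = -a^+_{q_p}a^-_{q_p}$,
and expanding
$\Var(X_1\1_{\{X_1\ge q_p\}})$ by bilinearity recovers exactly the expression $(b^+_{q_p})^2 + (b^-_{q_p})^2 + 2a^+_{q_p}a^-_{q_p}$ appearing in \eqref{eq: UasympN}. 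The second step is then a direct CLT: since the summands of $V_n(p)$ are i.i.d.\ with mean $a_{q_p}$ and finite second moment (from $\E[X_1^2]<\infty$), one obtains
$\sqrt{n}\bigl(V_n(p) - a_{q_p}\bigr) \overset{d}{\longrightarrow} \mathcal{N}\!\left(0,\, (b^+_{q_p})^2 + (b^-_{q_p})^2 + 2a^+_{q_p}a^-_{q_p}\right).$

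The third and decisive step is to transfer this limit from $V_n$ to $U_n$ by conditioning on $Q_n(p) = q$ and integrating against the density of $Q_n(p)$, which per \eqref{eq: Qn_asymp} concentrates on $q_p$ at rate $1/\sqrt{n}$. The main obstacle will be the rate: Corollary~\ref{cor: UVas} provides only $U_n - V_n \to 0$ a.s.\ without quantifying the speed, and the naive pointwise bound
$|U_n - V_n| \le \frac{1}{n}\sum_{i=1}^n |X_i|\1_{\{X_i \in [a_n, b_n)\}}$
with interval width $O_P(1/\sqrt{n})$ around $q_p$ contains $O_P(\sqrt{n})$ summands of size $\approx |q_p|$, yielding only $\sqrt{n}\,|U_n - V_n| = O_P(1)$. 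To reach $o_P(1)$ and apply Slutsky cleanly, I would decompose $\P[U_n \le x]$ over the high-probability event $\{|Q_n(p) - q_p| \le n^{-1/2+\varepsilon}\}$ and its complement, control the complement via the Gaussian tail in \eqref{eq: Qn_asymp}, and on the main event show that the extra mass accumulated in the shrinking interval contributes negligibly once integrated against the asymptotic density of $Q_n(p)$. This refinement — essentially a quantitative, rate-aware version of Lemma~\ref{lmm: topfrac} coupled with the total-probability formula — is the part where I expect the real technical work to lie.
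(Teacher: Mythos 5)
Your steps 1 and 2 are sound and coincide with the paper's own ingredients: the identity $X_1^+X_1^-\equiv 0$ gives $\Cov\bigl(X_1^+\1_{\{X_1\ge q_p\}},X_1^-\1_{\{X_1\ge q_p\}}\bigr)=-a^+_{q_p}a^-_{q_p}$ and hence the variance expression in \eqref{eq: UasympN}, and the CLT for $V_n(p)=\frac1n\sum_i X_i\1_{\{X_i\ge q_p\}}$ is immediate. The genuine gap is in your third step, and your own back-of-envelope estimate already reveals that it cannot be closed: $\sqrt{n}\,(U_n-V_n)$ is $O_P(1)$ but \emph{not} $o_P(1)$. Concretely, $U_n$ averages a deterministic number $n-\ceil{np}+1\approx n(1-p)$ of summands, whereas $V_n$ averages $\#\{i:X_i\ge q_p\\}\sim\mathcal{B}inomial(n,1-p)$ of them; the two counts differ by $O_P(\sqrt{n})$, and each surplus or deficit summand lies within $o_P(1)$ of $q_p$, so that
\begin{equation*}
n\,(U_n-V_n)\;=\;-\,q_p\Bigl(\#\{i:X_i\ge q_p\}-n(1-p)\Bigr)+o_P(\sqrt{n}),
\qquad
\sqrt{n}\,(U_n-V_n)\;\overset{d}{\longrightarrow}\;\mathcal{N}\bigl(0,\,q_p^2\,p(1-p)\bigr),
\end{equation*}
a nondegenerate limit that is moreover correlated with $\sqrt{n}\,(V_n-a_{q_p})$. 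No refinement of the conditioning/total-probability device on the event $\{|Q_n(p)-q_p|\le n^{-1/2+\varepsilon}\}$ can make this correction negligible, because it is not negligible.

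Carrying the correction through (the influence-function or Bahadur-representation computation for trimmed sums) gives $\sqrt{n}\,(U_n-a_{q_p})\overset{d}{\longrightarrow}\mathcal{N}\bigl(0,\Var((X_1-q_p)^+)\bigr)$, and
$\Var\bigl((X_1-q_p)^+\bigr)=\Var\bigl(X_1\1_{\{X_1\ge q_p\}}\bigr)-2q_p\,p\,a_{q_p}+q_p^2\,p(1-p)$
differs from the claimed $(b^+_{q_p})^2+(b^-_{q_p})^2+2a^+_{q_p}a^-_{q_p}=\Var(X_1\1_{\{X_1\ge q_p\}})$ unless $q_p=0$ (for $X_1$ uniform on $[0,1]$ and $p=1/2$ the two variances are $5/192$ versus $29/192$). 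So step 3 is not merely technically demanding; it aims at a target that the centering $a_{q_p}$ and the stated variance cannot jointly support. For comparison, the paper's proof takes a different route—squeezing $U_n$ between the averages $W_n$ and $V_n$ built from the deterministic thresholds $q_p\pm r_n$—but meets the same obstruction: to have $\P[|Q_n-q_p|\le r_n]\to1$ one needs $r_n\sqrt{n}\to\infty$, whence the two bounds differ by roughly $2|q_p|f(q_p)r_n\gg n^{-1/2}$ and the sandwich cannot identify the law at the $1/\sqrt{n}$ scale either. Your diagnosis of where the real work lies is therefore the right one; the repair is to recenter the summands by $q_p$ first, i.e., to analyze $\frac1n\sum_i(X_i-q_p)^+$, for which the threshold randomness genuinely is second order.
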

\begin{proof}
First, considering \eqref{eq: Qn_asymp}, for $n\to\infty$ there are some $r_n>0$ that $r_n\to 0$ and
\[
\P[Q_n\in B_{r_n}(q_p)]\approx 1.
\]
So, for $n\to\infty$ almost surely 
$$
q_p - r_n\le Q_n\le q_p + r_n,
$$
and so,
\[
\1_{\{X_i\ge q_p + r_n\}}\le
\1_{\{X_i\ge Q_n\}}\le
\1_{\{X_i\ge q_p - r_n\}}.
\]
Next, we have $X_i = X_i^+ - X_i^-$ where $X_i^+ = \max\{X_i,0\}$ and $X_i^- = \max\{-X_i,0\}$, and also
\begin{align*}
X_i^+\1_{\{X_i\ge q_p + r_n\}}\le
X_i^+\1_{\{X_i\ge Q_n\}}\le
X_i^+\1_{\{X_i\ge q_p - r_n\}},\\
X_i^-\1_{\{X_i\ge q_p + r_n\}}\le
X_i^-\1_{\{X_i\ge Q_n\}}\le
X_i^-\1_{\{X_i\ge q_p - r_n\}}.
\end{align*}
Hence,
\begin{align}
&X_i^+\1_{\{X_i\ge q_p + r_n\}}
-
X_i^-\1_{\{X_i\ge q_p - r_n\}}\notag\\
&\le
X_i\1_{\{X_i\ge Q_n\}}=
(X_i^+ - X_i^-)\1_{\{X_i\ge Q_n\}}\notag\\
&\le
X_i^+\1_{\{X_i\ge q_p - r_n\}}
-
X_i^-\1_{\{X_i\ge q_p + r_n\}},\label{ineqXi}
\end{align}
and so,
\[
W^+_+(n) - W^-_-(n)\leq
U_n\leq
W^+_-(n) - W^-_+(n),
\]
where
\begin{align*}
W^+_+(n) &= \frac{1}{n}\sum_{i=1}^{n}X_i^+\1_{\{ X_i\ge q_p + r_n\}},\\
W^+_-(n) &= \frac{1}{n}\sum_{i=1}^{n}X_i^+\1_{\{ X_i\ge q_p - r_n\}},\\
W^-_+(n) &= \frac{1}{n}\sum_{i=1}^{n}X_i^-\1_{\{ X_i\ge q_p + r_n\}},\\
W^-_-(n) &= \frac{1}{n}\sum_{i=1}^{n}X_i^-\1_{\{ X_i\ge q_p - r_n\}},
\end{align*}
are all Gaussian processes. So, the processes $W_n := W^+_+(n) - W^-_-(n)$ and $V_n := W^+_-(n) - W^-_+(n)$ are also Gaussian and 
\begin{equation}\label{ineqU}
	W_n\leq U_n\leq V_n.
\end{equation}
Now,
\begin{align*}
\E[W_n] &= \E[X_1^+\1_{\{X_1\geq q_p + r_n\}}] - \E[X_1^-\1_{\{X_1\geq q_p - r_n\}}] =: a^+_+(n,q_p) - a^-_-(n,q_p),\\
\E[V_n] &= \E[X_1^+\1_{\{X_1\geq q_p - r_n\}}] - \E[X_1^-\1_{\{X_1\geq q_p + r_n\}}] =: a^+_-(n,q_p) - a^-_+(n,q_p),
\end{align*}
and
\begin{align*}
\Var[W_n] &= \frac{1}{n}
\Big(
\Var[X_1^+\1_{\{X_1\geq q_p + r_n\}}] 
+\Var[X_1^-\1_{\{X_1\geq q_p - r_n\}}]\\
&\qquad\quad -2\Cov[X_1^+\1_{\{X_1\geq q_p + r_n\}},X_1^-\1_{\{X_1\geq q_p - r_n\}}]
\Big)\\
&=\frac{1}{n}\Big(
(b^+_+)^2(n,q_p) + (b^-_-)^2(n,q_p) + 2a^+_+(n,q_p)a^-_-(n,q_p)
\Big).
\end{align*}
Similarly,
\begin{align*}
	\Var[V_n] &= \frac{1}{n}
	\Big(
	\Var[X_1^+\1_{\{X_1\geq q_p - r_n\}}] 
	+\Var[X_1^-\1_{\{X_1\geq q_p + r_n\}}]\\
	&\qquad\quad -2\Cov[X_1^+\1_{\{X_1\geq q_p - r_n\}},X_1^-\1_{\{X_1\geq q_p + r_n\}}]
	\Big)\\
	&=\frac{1}{n}\Big(
	(b^+_-)^2(n,q_p) + (b^-_+)^2(n,q_p) + 2a^+_-(n,q_p)a^-_+(n,q_p)
	\Big).
\end{align*}
Then, for $n\to\infty$
\begin{align*}
\E[W_n],\E[V_n]&\approx a_{q_p},\\
\Var[W_n],\Var[V_n]&\approx\frac{1}{n}\Big(
(b^+_{q_p})^2 + (b^-_{q_p})^2 + 2a^+_{q_p}a^-_{q_p}
\Big).
\end{align*}
These and \eqref{ineqU} proves this Theorem. 
\end{proof}
\section{Asymptotic Distribution of Tail-Based Statistic}\label{sect: Asymp.Dist.Lamb}
%
The previous section discussed how $U_n$ is asymptotically a normal process $\mathcal{N}(\mu_n(p),\sigma^2_n(p)/n)$, that for known large-size sample distributions we have  $\mu_n(p)\approx a_{q_p}$ and $\sigma^2_n(p)\approx (b^+_{q_p})^2 + (b^-_{q_p})^2 + 2a^+_{q_p}a^-_{q_p}
$. Thus, the process $\Lambda_n$ has a ratio distribution of two correlated, noncentral, normally distributed processes, $U_n$ and $Z_n$. Here, we aim to identify this ratio distribution.
The method used in the following lemma was initiated by \cite{hinkley1969ratio} and further developed by \cite{springer1979algebra}, who transformed the variables into a ratio of two uncorrelated normal processes with a constant offset. \cite{geary1930frequency} showed that these ratios can be “almost Gaussian” under certain restrictions, \cite{fieller1932distribution} provided an exact analysis, and \cite{pham2006density} examined them comprehensively. However, their computational combinations and associated complexities must be taken into account. \cite{hinkley1969ratio} also developed exact results for the correlated case. By transforming the variables to be uncorrelated, however, one can simply apply Hinkley’s formula rather than resorting to more complicated expressions.
\begin{lmm}\label{lmm: COV[u,z]}
	Let $X_1,X_2,\ldots,X_n$ be i.i.d square integrable random variables, i.e., $\E[X_1^2]<\infty$, with common absolutely continuous distribution $F$. Then, for $n\to\infty$,
\begin{align}
	C_n(p) &= \Cov[U_n,Z_n] = c_n(p)/n,\\
	\rho_n(p) &= \Cor[U_n,Z_n] = \frac{c_n(p)}{\sigma\cdot\sigma_n(p)},\\
	c_n(p)&\approx a^{(2)}_{q_p} - \mu a_{q_p} = b^2_{q_p}-\tilde a_{q_p}a_{q_p},
\end{align}
where $a^{(2)}_{q_p} = \E[X_i^2\1_{\{X_i\geq q_p\}}]$ and $\tilde a_{q_p} = \mu - a_{q_p} = \E[X_i\1_{\{X_i\leq q_p\}}]$.
\end{lmm}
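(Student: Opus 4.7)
The plan is to reduce the computation to the i.i.d.\ case by replacing the data-dependent threshold $Q_n(p)$ with the deterministic quantile $q_p$, and then to exploit the bilinearity of covariance together with the independence of the sample.

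First I would introduce the auxiliary process $V_n(p) = \frac{1}{n}\sum_{i=1}^n X_i \1_{\{X_i \ge q_p\}}$ already used in the proof of Lemma \ref{lmm: topfrac}. Corollary \ref{cor: UVas} gives $U_n - V_n \overset{a.s.}{\longrightarrow} 0$, and the sandwich
$$
W^+_+(n) - W^-_-(n) \le U_n \le W^+_-(n) - W^-_+(n)
$$
used in the proof of Theorem \ref{thm: asymp.norm}, with the window $r_n$ of order $(nf^2(q_p))^{-1/2}$ dictated by \eqref{eq: Qn_asymp}, supplies the sharper quantitative control $n\,\Var[U_n - V_n] \to 0$, since the difference is a sum of terms of the form $X_i\bigl(\1_{\{X_i \ge q_p\pm r_n\}} - \1_{\{X_i \ge q_p\}}\bigr)$ concentrated in a shrinking neighborhood of $q_p$ where $f$ is continuous.

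Next I would split
$$
\Cov[U_n, Z_n] = \Cov[V_n, Z_n] + \Cov[U_n - V_n, Z_n].
$$
The first term is exact: by bilinearity and by the mutual independence of the $X_i$'s,
$$
\Cov[V_n, Z_n] = \frac{1}{n^2}\sum_{i,j=1}^n \Cov[X_i \1_{\{X_i \ge q_p\}}, X_j] = \frac{1}{n}\Cov[X_1 \1_{\{X_1 \ge q_p\}}, X_1],
$$
since every off-diagonal term vanishes. Expanding the remaining covariance yields $\E[X_1^2 \1_{\{X_1 \ge q_p\}}] - \E[X_1 \1_{\{X_1 \ge q_p\}}]\mu = a^{(2)}_{q_p} - \mu a_{q_p}$. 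The cross term is handled by Cauchy--Schwarz,
$$
\bigl|\Cov[U_n - V_n, Z_n]\bigr| \le \sqrt{\Var[U_n - V_n]\,\Var[Z_n]} = \frac{\sigma}{\sqrt{n}}\sqrt{\Var[U_n - V_n]},
$$
so that $n\,\Cov[U_n - V_n, Z_n] \to 0$ as soon as $n\,\Var[U_n - V_n] \to 0$, identifying $c_n(p) := n\,\Cov[U_n, Z_n] \to a^{(2)}_{q_p} - \mu a_{q_p}$.

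The remaining assertions are bookkeeping. The correlation formula follows from $\Var[Z_n] = \sigma^2/n$ and $\Var[U_n] \approx \sigma_n^2(p)/n$ from Theorem \ref{thm: asymp.norm}, which yields $\rho_n(p) = (c_n(p)/n)/(\sigma/\sqrt{n}\cdot\sigma_n(p)/\sqrt{n}) = c_n(p)/(\sigma\,\sigma_n(p))$. The algebraic identity $a^{(2)}_{q_p} - \mu a_{q_p} = b^2_{q_p} - \tilde a_{q_p}a_{q_p}$ follows by reading $b^2_{q_p} = \Var[X_1 \1_{\{X_1 \ge q_p\}}] = a^{(2)}_{q_p} - a_{q_p}^2$ (compatible with the variance decomposition in Theorem \ref{thm: asymp.norm}) and $\mu = a_{q_p} + \tilde a_{q_p}$, then substituting. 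The main obstacle will be the quantitative estimate $n\,\Var[U_n - V_n]\to 0$: the almost-sure convergence from Lemma \ref{lmm: topfrac} alone is too weak to conclude that the cross covariance is $o(1/n)$, so one has to lean on the $n^{-1/2}$ concentration of $Q_n(p)$ around $q_p$ and the continuity of $f$ at $q_p$, both already in play in the proofs of Lemma \ref{lmm: topfrac} and Theorem \ref{thm: asymp.norm}.
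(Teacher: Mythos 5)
Your route is genuinely different from the paper's: you decompose $U_n=V_n+(U_n-V_n)$, compute $\Cov[V_n,Z_n]$ exactly from independence, and try to kill the remainder by Cauchy--Schwarz, whereas the paper works directly with $\Cov[U_n,Z_n]=\frac{1}{n^2}\sum_{i,j}\Cov[X_i\1_{\{X_i\ge Q_n\}},X_j]$ and sandwiches each product $X_jX_i\1_{\{X_i\ge Q_n\}}$ between the deterministic-threshold versions at $q_p\pm r_n$, passing to the limit termwise. Your computation of the main term is correct and reproduces the paper's value $\bigl(a^{(2)}_{q_p}-\mu a_{q_p}\bigr)/n$, and your reading of $b^2_{q_p}$ and $\tilde a_{q_p}$ for the final algebraic identity is consistent with the statement.

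The genuine gap is that the estimate you defer, $n\,\Var[U_n-V_n]\to 0$, is not merely unproven --- it is false whenever $q_p\neq 0$. The number of sample points lying between $Q_n(p)$ and $q_p$ equals $\bigl|(n-\ceil{np}+1)-n(1-F_n(q_p^-))\bigr|$, which is of exact order $\sqrt{n}$ by the CLT for the empirical CDF; each such point is within $o(1)$ of $q_p$, so $U_n-V_n\approx q_p\bigl(F_n(q_p^-)-p\bigr)$ and $n\,\Var[U_n-V_n]\to q_p^2\,p(1-p)>0$. Consequently your Cauchy--Schwarz bound gives $\bigl|n\,\Cov[U_n-V_n,Z_n]\bigr|\le\sigma\sqrt{n\,\Var[U_n-V_n]}\to\sigma q_p\sqrt{p(1-p)}$, and in fact the cross term converges to $q_p\bigl(\tilde a_{q_p}-p\mu\bigr)$, which survives at precisely the $1/n$ scale you are trying to compute; the $n^{-1/2}$ concentration of $Q_n(p)$ cannot rescue this. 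To be fair, your decomposition localizes exactly the contribution that the paper's own proof also discards: the off-diagonal covariances $\Cov[X_i\1_{\{X_i\ge Q_n\}},X_j]$, $i\neq j$, are each of order $1/n$ (since $Q_n$ depends on $X_j$ with influence $1/n$) and there are of order $n^2$ of them, so their sum contributes at the same order as the diagonal. A correct version of the lemma would carry the quantile-estimation term $q_p\bigl(\1_{\{X\le q_p\}}-p\bigr)$ in the influence function of $U_n$; as written, neither your argument nor the paper's closes without it.
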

\begin{proof}
\begin{align*}
	\Cov[U_n,Z_n] 
	&= \frac{1}{n^2}\Cov\left[
	\sum_{i=1}^{n}X_i\1_{\{X_i\geq Q_n\}},
	\sum_{j=1}^{n}X_j
	\right]\\
	&= \frac{1}{n^2}\sum_{i,j=1}^{n}\Cov[X_i\1_{\{X_i\geq Q_n\}},X_j].
\end{align*}
From \eqref{ineqXi} we have
\begin{align*}
	&X_j^+X_i^+\1_{\{X_i\ge q_p + r_n\}}
	-
	X_j^+X_i^-\1_{\{X_i\ge q_p - r_n\}}\\
	&\le
	X_j^+X_i\1_{\{X_i\ge Q_n\}}\\
	&\le
	X_j^+X_i^+\1_{\{X_i\ge q_p - r_n\}}
	-
	X_j^+X_i^-\1_{\{X_i\ge q_p + r_n\}},
\end{align*}
and also,
\begin{align*}
	&X_j^-X_i^+\1_{\{X_i\ge q_p + r_n\}}
	-
	X_j^-X_i^-\1_{\{X_i\ge q_p - r_n\}}\\
	&\le
	X_j^-X_i\1_{\{X_i\ge Q_n\}}\\
	&\le
	X_j^-X_i^+\1_{\{X_i\ge q_p - r_n\}}
	-
	X_j^-X_i^-\1_{\{X_i\ge q_p + r_n\}}.
\end{align*}
So,
\begin{align*}
	&X_j^+X_i^+\1_{\{X_i\ge q_p + r_n\}}
	-
	X_j^+X_i^-\1_{\{X_i\ge q_p - r_n\}}\\
	&-
	X_j^-X_i^+\1_{\{X_i\ge q_p - r_n\}}
	+
	X_j^-X_i^-\1_{\{X_i\ge q_p + r_n\}}\\
	&\le
	X_jX_i\1_{\{X_i\ge Q_n\}}
	=(X_j^+ - X_j^-)X_i\1_{\{X_i\ge Q_n\}}\\
	&\le
	X_j^+X_i^+\1_{\{X_i\ge q_p - r_n\}}
	-
	X_j^+X_i^-\1_{\{X_i\ge q_p + r_n\}}\\
	&-
	X_j^-X_i^+\1_{\{X_i\ge q_p + r_n\}}
	+
	X_j^-X_i^-\1_{\{X_i\ge q_p - r_n\}},
\end{align*}
and for $n\to\infty$ we have
\begin{align*}
	&\E[X_jX_i\1_{\{X_i\geq Q_n\}}]\approx\E[X_jX_i\1_{\{X_i\geq q_p\}}]\\
	&=
	\begin{cases}
		\E[X_i^2\1_{\{X_i\geq q_p\}}] = a^{(2)}_{q_p} & i=j\\
		\E[X_j]\E[X_i\1_{\{X_i\geq q_p\}}]=\mu a_{q_p} & i\leq j.\\
	\end{cases}
\end{align*}
Hence,
\begin{align*}
	\Cov[X_i\1_{\{X_i\geq Q_n\}}, X_j]
	\approx
	\begin{cases}
		a^{(2)}_{q_p} - \mu a_{q_p} & i=j\\
		0 & i\neq j,\\
	\end{cases}
\end{align*}
and so,
\[
\Cov[U_n,Z_n]=\left(a^{(2)}_{q_p} - \mu a_{q_p}\right)\Big/n.
\]
\end{proof}
%
\begin{lmm}\label{lmm: offset}
	For $\mu\neq 0$, the process
	$$
	\widetilde{\Lambda}_n=\Lambda_n-\frac{c_n}{\sigma^2}
	$$
	is a ratio of uncorrelated noncentral Gaussian processes, and has the probability density function
	\begin{align}\label{eq: pdftild}
		f_{\widetilde{\Lambda}_n}(t)=
		&\;
		\sqrt{\frac{n}{2\pi\sigma^2\tilde \sigma^2_n}}
		\cdot\frac{\widetilde{B}(t)}{\widetilde{A}^3(t)}
		\cdot \exp\left[{-\frac{n}{2}\cdot\frac{\mu^2}{\sigma^2}\cdot\frac{\left(t-{\tilde\mu_n}/{\mu}\right)^2}{t^2+{\tilde\sigma^2_n}/{\sigma^2}}}\right]
		\erf\left(\frac{\widetilde{B}(t)}{\widetilde{A}(t)}\sqrt{\frac{n}{2}}\right)\notag\\
		&+ \frac{e^{-\frac{n}{2}r^2_n}}{{\pi\sigma\tilde\sigma_n}\widetilde{A}^2(t)},
	\end{align}
	where
	\begin{gather*}
		\tilde{\mu}_n = \mu_n - \mu c_n/\sigma^2,\quad
		\tilde{\sigma}_n = \sqrt{\sigma^2_n - c^2_n/\sigma^2},\quad
		r^2_n = \tilde{\mu}^2_n/\tilde{\sigma}^2_n + \mu^2/\sigma^2\\
		\widetilde{A}(t) =\sqrt{\frac{t^2}{\tilde{\sigma}^2_n} + \frac{1}{\sigma^2}},\quad
		\widetilde{B}(t) =\frac{\tilde{\mu}_n}{\tilde{\sigma}_n^2}\, t + \frac{\mu}{\sigma^2}.
	\end{gather*}
\end{lmm}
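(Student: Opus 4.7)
The strategy is to rewrite $\widetilde{\Lambda}_n$ as a ratio of two independent Gaussians and then invoke the classical Hinkley (1969) formula for such a ratio. Define the decorrelated numerator
\[
\widetilde{U}_n \;:=\; U_n - \frac{c_n}{\sigma^2}\,Z_n,
\]
so that $\widetilde{\Lambda}_n = \Lambda_n - c_n/\sigma^2 = \widetilde{U}_n / Z_n$. Using Lemma~\ref{lmm: COV[u,z]} together with $\Var[Z_n]=\sigma^2/n$, a direct bilinearity computation gives $\Cov[\widetilde{U}_n, Z_n] = c_n/n - (c_n/\sigma^2)(\sigma^2/n) = 0$. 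Since $(U_n, Z_n)$ is (asymptotically) jointly Gaussian by Theorem~\ref{thm: asymp.norm} combined with the classical CLT for $Z_n$, the linear combination $(\widetilde{U}_n, Z_n)$ is also jointly Gaussian, and vanishing covariance then promotes to genuine independence. An elementary moment computation gives $\E[\widetilde{U}_n] = \mu_n - \mu c_n/\sigma^2 = \tilde\mu_n$ and $\Var[\widetilde{U}_n] = \sigma_n^2/n - c_n^2/(n\sigma^2) = \tilde\sigma_n^2/n$, matching the definitions in the statement.

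With independence in hand, I would apply Hinkley's ratio density formula for independent $W\sim\mathcal{N}(\mu_W,\sigma_W^2)$ and $Y\sim\mathcal{N}(\mu_Y,\sigma_Y^2)$,
\[
f_{W/Y}(t) = \frac{b(t)}{\sqrt{2\pi}\,\sigma_W\sigma_Y\,a^3(t)}\,\exp\!\left(\frac{b^2(t)-c\,a^2(t)}{2\,a^2(t)}\right)\erf\!\left(\frac{b(t)}{\sqrt{2}\,a(t)}\right) + \frac{e^{-c/2}}{\pi\,\sigma_W\sigma_Y\,a^2(t)},
\]
with the usual $a^2(t) = t^2/\sigma_W^2 + 1/\sigma_Y^2$, $b(t) = \mu_W t/\sigma_W^2 + \mu_Y/\sigma_Y^2$, and $c = \mu_W^2/\sigma_W^2 + \mu_Y^2/\sigma_Y^2$. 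Plugging in $\mu_W=\tilde\mu_n$, $\sigma_W^2=\tilde\sigma_n^2/n$, $\mu_Y=\mu$, $\sigma_Y^2=\sigma^2/n$ factors the $n$-dependence cleanly: one finds $a(t) = \sqrt{n}\,\widetilde A(t)$, $b(t) = n\,\widetilde B(t)$, $c = n r_n^2$, and $\sigma_W\sigma_Y = \sigma\tilde\sigma_n/n$. Substituting these into Hinkley's expression reproduces the two summands of \eqref{eq: pdftild} term by term (e.g.\ the prefactor $b/(\sqrt{2\pi}\sigma_W\sigma_Y a^3)$ collapses to $\sqrt{n/(2\pi\sigma^2\tilde\sigma_n^2)}\cdot\widetilde B/\widetilde A^3$, and the residual term becomes $e^{-nr_n^2/2}/(\pi\sigma\tilde\sigma_n\widetilde A^2)$).

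The only non-routine step, and hence the main technical obstacle, is the simplification of the Gaussian exponent. I would verify the identity
\[
\widetilde B^2(t) - r_n^2\,\widetilde A^2(t) \;=\; -\,\frac{(\mu t - \tilde\mu_n)^2}{\sigma^2\,\tilde\sigma_n^2},
\]
by expanding both sides: the pure $t^2$ and constant terms cancel exactly, leaving only the cross and square terms that assemble into a perfect square. Dividing by $\widetilde A^2(t) = (\sigma^2 t^2 + \tilde\sigma_n^2)/(\sigma^2\tilde\sigma_n^2)$ then produces exactly $-(\mu^2/\sigma^2)(t-\tilde\mu_n/\mu)^2/(t^2+\tilde\sigma_n^2/\sigma^2)$, which is the exponent displayed in \eqref{eq: pdftild}. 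A minor conceptual caveat is that Theorem~\ref{thm: asymp.norm} provides asymptotic rather than exact normality of $U_n$, so the identification of $\widetilde{\Lambda}_n$ with a Gaussian ratio and the resulting density are understood in the limit $n\to\infty$, consistent with the asymptotic viewpoint of Section~\ref{sect: Asymp.Dist.Num}.
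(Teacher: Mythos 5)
Your proposal is correct and follows essentially the same route as the paper: both perform the Geary--Hinkley decorrelation (subtracting $\tfrac{c_n}{\sigma^2}$ times the denominator from the numerator, which is exactly the paper's $\tilde u_n = u_n - \rho_n\tfrac{\sigma_n}{\sigma}z_n$), then apply Hinkley's ratio density with the scalings $a=\sqrt{n}\widetilde A$, $b=n\widetilde B$, $R^2=nr_n^2$, and finish by simplifying the exponent via the identity $\widetilde B^2 - r_n^2\widetilde A^2 = -(\mu t-\tilde\mu_n)^2/(\sigma^2\tilde\sigma_n^2)$. Your caveat that normality of $U_n$ is only asymptotic (so the density holds in the $n\to\infty$ sense) is also consistent with the paper's framing.
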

\begin{proof}
	We have
	\begin{equation}\label{eq: Tfrac1}
		\Lambda_n=
		\frac{\mu_n + u_n}
		{\mu + z_n},
	\end{equation}
	where $u_n(p)$ and $z_n$ are corellated central Gaussian variables with variances $\sigma_n^2/n$ and  $\sigma^2/n$ respectively, and correlation $\rho_n$. Now, we apply the Geary-Hinkley transformation. Let
	\begin{align*}
		\tilde{u}_n
		&=u_n-\rho_{u,z}\frac{\sigma_u}{\sigma_z}z_n\\
		&=u_n-\rho_n\frac{\sigma_n}{\sigma}z_n\\
		&=u_n-\frac{c_n}{\sigma^2}z_n,
	\end{align*}
	then $\tilde{u}_n$ and $z_n$ are uncorrelated, and by
	\begin{gather*}
		\E[\tilde{u}_n]=\mu_n - \mu\cdot\frac{c_n}{\sigma^2}=:\tilde\mu_n,\\	
		\Var[\tilde{u}_n]=
		(\sigma^2_n - c^2_n/\sigma^2)/n=:
		\tilde\sigma^2_n/n,
	\end{gather*}
	we have
	$$
	\Lambda_n=
	\frac{\tilde\mu_n + \tilde{u}_n}
	{\mu + z_n}
	+\frac{c_n}{\sigma^2}.
	$$
	Hence,
	$$
	\widetilde{\Lambda}_n
	=\Lambda_n-\frac{c_n}{\sigma^2}
	=\frac{\tilde\mu_n + \tilde{u}_n}
	{\mu + z_n},
	$$
	is a ratio of uncorrelated noncenteral Gaussian process, and so by \cite{hinkley1969ratio} has the probability density function
	\begin{equation}\label{eq: f_Yn}
		f_{\widetilde{\Lambda}_n}(t)\;
		= n\,\frac{\exp(-R^2/2)}{2\pi\sigma_n \sigma\,a^2(t)}\,
		\left[\sqrt{2\pi}\,\frac{b(t)}{a(t)}\,\exp\left(\frac{b^2(t)}{2a^2(t)}\right)\erf\left(\frac{b(t)}{\sqrt{2}a(t)}\right) + 2\,\right],
	\end{equation}
	where
	\begin{align*}
		R^2 &= n\left(\frac{\tilde{\mu}^2_n}{\tilde{\sigma}^2_n} + \frac{\mu^2}{\sigma^2}\right) = n r^2_n,\\
		a(t) &= \sqrt{n\left(\frac{t^2}{\tilde{\sigma}^2_n} + \frac{1}{\sigma^2}\right)} = \sqrt{n}\widetilde{A}(t),\\
		b(t) &= n\left(\frac{\tilde{\mu}_n}{\tilde{\sigma}^2_n}\, t + \frac{\mu}{\sigma^2}\right) = n\widetilde{B}(t).
	\end{align*}
	Finally, since
	\begin{equation*}
		\frac12\left(R^2 - \frac{b^2(t)}{a^2(t)}\right)
		=\frac{n}{2}\left(R^2 - \frac{B^2(t)}{A^2(t)}\right)
		=\frac{n}{2}\cdot\frac{\mu^2}{\sigma^2}\cdot\frac{\left(t-\tilde\mu_n/{\mu}\right)^2}{t^2+{\tilde\sigma^2_n}/{\sigma^2}},
	\end{equation*}
	\eqref{eq: f_Yn} results \eqref{eq: pdftild}. 
\end{proof}
Compensating the constant offset $-c_n/\sigma^2$ of the Lemma \ref{lmm: offset}, by changing variable $t\mapsto t - c_n/\sigma^2$ we have the following theorem.
\begin{thm}\label{thm: exactratio}
	For $\mu\neq 0$ and $n\to\infty$, the (asymptotic) probability density function of $\Lambda_n$ is
	\begin{align}
		f_{\Lambda_n}(t)=
		&\;\sqrt{\frac{n}{2\pi(\sigma^2\sigma^2_n-c_n^2)}}
		\cdot\frac{B(t)}{A^3(t)}
		\;\erf\left(\frac{B(t)}{A(t)}\sqrt{\frac{n}{2}}\right)\notag\\
		&\times \exp\left[{-\frac{n}{2}\cdot\frac{\mu^2}{\sigma^2}\cdot\frac{(t-\mu_n/\mu)^2}{(t-\sigma_n/\sigma)^2 + 2t(1-\rho_n)\sigma_n/\sigma}}\right]
		\notag\\
		&+ \;\frac{e^{-\frac{n}{2}r^2_n}}{\pi A^2(t)\sqrt{\sigma^2\sigma^2_n-c^2_n}},\label{eq: Ratio.Dist}
	\end{align}
	where
	\begin{align*}
		A(t)&=\sqrt{\frac{(t - c_n/\sigma^2)^2}{\sigma_n^2-c_n^2/\sigma^2} + \frac{1}{\sigma^2}},\\
		B(t)&=\left(\frac{\mu_n-c_n\mu/\sigma^2}{\sigma_n^2-c_n^2/\sigma^2}\right)(t - c_n/\sigma^2)\,+\, \frac{\mu}{\sigma^2},\\
		r^2_n&=\frac{(\mu_n-c_n\mu/\sigma^2)^2}{\sigma_n^2-c_n^2/\sigma^2} + \frac{\mu^2}{\sigma^2}.
	\end{align*}
\end{thm}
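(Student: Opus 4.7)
The plan is to deduce the stated density directly from Lemma \ref{lmm: offset} by a linear change of variables, since the paragraph preceding the theorem already announces the substitution $t\mapsto t-c_n/\sigma^2$. Because $\Lambda_n=\widetilde{\Lambda}_n+c_n/\sigma^2$, the densities satisfy $f_{\Lambda_n}(t)=f_{\widetilde{\Lambda}_n}(t-c_n/\sigma^2)$ with unit Jacobian, so the entire task reduces to substituting $t\mapsto t-c_n/\sigma^2$ in formula \eqref{eq: pdftild} and verifying that the result has the shape \eqref{eq: Ratio.Dist}.

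First I would track the algebraic objects $\widetilde{A}$, $\widetilde{B}$, and $r_n^2$ through the shift. Using the definitions $\tilde\mu_n=\mu_n-\mu c_n/\sigma^2$ and $\tilde\sigma_n^2=\sigma_n^2-c_n^2/\sigma^2$, a one-line substitution shows $\widetilde{A}(t-c_n/\sigma^2)=A(t)$ and $\widetilde{B}(t-c_n/\sigma^2)=B(t)$ for the $A,B$ declared in Theorem \ref{thm: exactratio}; the term $r_n^2$ has no $t$-dependence and is preserved. This immediately turns the prefactor $\widetilde{B}/\widetilde{A}^3$, the argument of $\erf$, and the $1/\widetilde{A}^2$ factor in the second summand into the desired $B/A^3$, $B/A$, and $1/A^2$. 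The leading constants and the $\sqrt{\sigma^2\tilde\sigma_n^2}$ denominator are rewritten through the identity $\sigma^2\tilde\sigma_n^2=\sigma^2\sigma_n^2-c_n^2$.

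The only piece that needs genuine care is the Gaussian exponent. After the shift, its denominator is $(t-c_n/\sigma^2)^2+\tilde\sigma_n^2/\sigma^2$; I would expand this and invoke the identity $c_n/\sigma^2=\rho_n\sigma_n/\sigma$ from Lemma \ref{lmm: COV[u,z]} to obtain $t^2-2t\rho_n\sigma_n/\sigma+\sigma_n^2/\sigma^2$, then complete the square in the form $(t-\sigma_n/\sigma)^2+2t(1-\rho_n)\sigma_n/\sigma$ as required. For the numerator, the cancellation $t-c_n/\sigma^2-\tilde\mu_n/\mu=t-\mu_n/\mu$ is immediate once $\tilde\mu_n/\mu$ is written out. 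Assembling the shifted pieces reproduces \eqref{eq: Ratio.Dist} term by term.

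The main obstacle is purely bookkeeping: several square roots and a reciprocal-quadratic denominator must be regrouped consistently, and the rewriting of the exponent denominator into the $(1-\rho_n)$ form is easy to miss if one does not recall $c_n=\rho_n\sigma\sigma_n$. No new probabilistic input is needed; once \eqref{eq: pdftild} is taken as given, the theorem is a shift plus symbolic simplification, so the write-up should emphasize the two non-obvious identities $\sigma^2\tilde\sigma_n^2=\sigma^2\sigma_n^2-c_n^2$ and $(t-c_n/\sigma^2)^2+\tilde\sigma_n^2/\sigma^2=(t-\sigma_n/\sigma)^2+2t(1-\rho_n)\sigma_n/\sigma$ and leave the remaining substitutions to inspection.
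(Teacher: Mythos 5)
Your proposal is correct and follows essentially the same route as the paper: the paper likewise establishes $f_{\Lambda_n}(t)=f_{\widetilde{\Lambda}_n}(t-c_n/\sigma^2)$ (via the CDF of the shifted variable) and then records the identity rewriting the exponent denominator as $(t-\sigma_n/\sigma)^2+2t(1-\rho_n)\sigma_n/\sigma$. Your additional bookkeeping — checking $\widetilde{A}(t-c_n/\sigma^2)=A(t)$, $\widetilde{B}(t-c_n/\sigma^2)=B(t)$, $\sigma^2\tilde\sigma_n^2=\sigma^2\sigma_n^2-c_n^2$, and the use of $c_n/\sigma^2=\rho_n\sigma_n/\sigma$ — is exactly the verification the paper leaves implicit.
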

\begin{proof}
	For all $T\in\R$
	\begin{align*}
		F_{\Lambda_n}(T)&=\P[\Lambda_n\le T]\\
		&=\P[\widetilde{\Lambda}_n\le T-c_n/\sigma^2]\\
		&=\int_{-\infty}^{T-c_n/\sigma^2}f_{\widetilde{\Lambda}_n}(u)\, du\\
		&=\int_{-\infty}^{T}f_{\widetilde{\Lambda}_n}(t-c_n/\sigma^2)\, dt,\\
	\end{align*}
	and so,
	\begin{equation*}
		f_{\Lambda_n}(t)
		=
		f_{\widetilde{\Lambda}_n}(t-c_n/\sigma^2),
	\end{equation*}
	where $f_{\widetilde{\Lambda}_n}$ is given by Lemma \ref{lmm: offset}. Now, one may note
	\[
	\frac{(t - c_n/\sigma^2 - \tilde\mu_n/\mu)^2}{(t-c_n/\sigma^2)^2 + \tilde\sigma^2_n/\sigma^2}
	=
	\frac{(t - \mu_n/\mu)^2}{(t - \sigma_n/\sigma)^2 + 2t(1-\rho_n)\sigma_n/\sigma}.
	\]
\end{proof}
There is another approach to investigate the ratio distribution provided by Katz (1978) distribution approximation \cite{katz1978obtaining}. Here, we formulate it for $\Lambda_n$ by the following proposition.
\begin{pro}\label{pro: logdist}
	For $\mu\neq 0$ and $n\to\infty$, the process $\Lambda_n$ admits the approximatly logarithmic Gaussian distribution
	\begin{equation}\label{eq: logdist}
		\Lambda_n
		\sim\frac{\mu_n}{\mu}
		\cdot\LN\Bigg(0,
		\frac{\frac{\sigma^2}{\mu^2}
			+ \frac{\sigma_n^2}{\mu_n^2}
			- \frac{2c_n}{\mu_n\mu}
		}{n}
		\Bigg).
	\end{equation} 
\end{pro}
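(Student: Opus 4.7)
The plan is to derive the lognormal approximation via a logarithmic (delta-method) expansion of the ratio, in the spirit of Katz (1978). First, I would recall that by Theorem \ref{thm: asymp.norm} and the classical CLT, the numerator and denominator of $\Lambda_n = U_n/Z_n$ are jointly asymptotically normal: $U_n \sim \mathcal{N}(\mu_n,\sigma_n^2/n)$, $Z_n\sim\mathcal{N}(\mu,\sigma^2/n)$, with covariance $c_n/n$ given by Lemma \ref{lmm: COV[u,z]}. Since $\mu\neq 0$, the strong law together with Theorem \ref{thm: LambdaConverg} ensures that for all sufficiently large $n$ the variables $U_n$ and $Z_n$ stay almost surely bounded away from zero, so the logarithms $\log U_n$ and $\log Z_n$ are well-defined with probability tending to one.

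Next, I would perform a first-order Taylor expansion of $\log$ around the respective means,
\begin{equation*}
\log\Lambda_n \;=\; \log U_n - \log Z_n \;\approx\; \log\frac{\mu_n}{\mu} \;+\; \frac{U_n-\mu_n}{\mu_n} \;-\; \frac{Z_n-\mu}{\mu},
\end{equation*}
with remainder of order $O_p(1/n)$. A linear combination of two jointly asymptotically normal variables is again asymptotically normal, so $\log\Lambda_n$ is asymptotically Gaussian. Its mean is $\log(\mu_n/\mu)$ to leading order, and by the bilinear covariance formula its variance equals
\begin{equation*}
\frac{1}{\mu_n^{2}}\Var[U_n] \,+\, \frac{1}{\mu^{2}}\Var[Z_n] \,-\, \frac{2}{\mu_n\mu}\Cov[U_n,Z_n] \;=\; \frac{1}{n}\left(\frac{\sigma_n^2}{\mu_n^2} + \frac{\sigma^2}{\mu^2} - \frac{2c_n}{\mu_n\mu}\right),
\end{equation*}
which matches the variance in \eqref{eq: logdist}.

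Exponentiating, $\Lambda_n$ is approximately lognormal with the displayed parameters; the multiplicative factorization $\LN(\log(\mu_n/\mu),v_n^2) = (\mu_n/\mu)\cdot\LN(0,v_n^2)$ follows immediately from the shift property $e^{m+W} = e^m e^W$ for $W\sim\mathcal{N}(0,v_n^2)$. The principal technical obstacle is justifying the error control in the Taylor step: one must argue that the higher-order terms in $\log(1+(U_n-\mu_n)/\mu_n)$ and $\log(1+(Z_n-\mu)/\mu)$ are $o_p(1/\sqrt{n})$ and hence asymptotically negligible relative to the Gaussian fluctuations. This can be done either by invoking the multivariate delta method applied to the map $(u,z)\mapsto \log(u/z)$ at the point $(\mu_n,\mu)$, or by a direct Chebyshev bound using the square-integrability hypothesis $\E[X_1^2]<\infty$, which controls the tails of $U_n-\mu_n$ and $Z_n-\mu$ uniformly in $n$. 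The "approximately" qualifier in the statement precisely reflects this first-order character of the Katz transformation.
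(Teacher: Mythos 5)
Your proposal is correct and follows essentially the same route as the paper: the paper writes $\Lambda_n=(\mu_n/\mu)(1+u_n/\mu_n)/(1+z_n/\mu)$ with $u_n=U_n-\mu_n$, $z_n=Z_n-\mu$, takes logarithms, truncates the series $\log(1+x)\approx x$ at first order, and reads off the Gaussian mean and variance exactly as you do via the delta method. Your added remarks on controlling the higher-order remainder go slightly beyond the paper's (informal) treatment but do not change the argument.
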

\begin{proof}
	From \eqref{eq: Tfrac1} we have
	\begin{equation}\label{eq: Tfrac2}
		\Lambda_n=\frac{\mu_n}{\mu}\cdot\frac{1 + u_n/\mu_n}{1 + z_n/\mu},
	\end{equation}
	where $u_n\sim\mathcal{N}(0,\sigma_n^2/n)$ and $z_n\sim\mathcal{N}(0,\sigma^2/n)$. Taking the logarithm of \eqref{eq: Tfrac2}, we have
	\begin{equation}\label{eq: TfracLog}
		\log\Lambda_n = \log\left(\frac{\mu_n}{\mu}\right)
		+ \log\left(1 + \frac{u_n}{\mu_n}\right)
		- \log\left(1 + \frac{z_n}{\mu}\right).
	\end{equation}
	Here, we apply the logarithmic power series, covergent on $|x|<1$, 
	$$
	\log(1+x)=\sum_{k=0}^\infty(-1)^k\frac{x^{k+1}}{k+1}=x-\frac{x^2}{2}+\frac{x^3}{3}-\cdots
	$$
	to approximate the two final part of the right hand side of \eqref{eq: TfracLog}. By the first power $k=1$, for $n\to\infty$ we have
	\begin{align}
		\log\Lambda_n 
		&\approx\log\left(\frac{\mu_n}{\mu}\right)
		+ \frac{u_n}{\mu_n}
		- \frac{z_n}{\mu}\notag\\
		&\sim\log\left(\frac{\mu_n}{\mu}\right)
		+\mathcal{N}\Bigg(0,
		\frac{\frac{\sigma^2}{\mu^2}
			+ \frac{\sigma_n^2}{\mu_n^2}
			- \frac{2c_n}{\mu_n\mu}
		}{n}\label{eq: Tlogapprox}
		\Bigg),
	\end{align}
	or equivalently
	\begin{equation*}
		\Lambda_n
		\approx\frac{\mu_n}{\mu}
		\cdot\exp\left(\frac{u_n}{\mu_n} - \frac{z_n}{\mu}\right),
	\end{equation*}
	and this proves \eqref{eq: logdist}.
\end{proof}
\begin{rem}
	The asymptotic ratio distributions \eqref{eq: Ratio.Dist} and \eqref{eq: logdist} are indeed usefull when the sample distribution is unknown. However, if the sample distribution is known, then by Theorem \ref{thm: LambdaConverg}, Theorem \ref{thm: asymp.norm}, and Lemma \ref{lmm: COV[u,z]} for $n\to\infty$ we can apply 
	\begin{align*}
		\mu_n &\approx a_{q_p},\\ 
		\sigma^2_n &\approx (b^+_{q_p})^2 + (b^-_{q_p})^2 + 2a^+_{q_p}a^-_{q_p},\\
		c_n& \approx b^2_{q_p}-\tilde a_{q_p}a_{q_p},
	\end{align*} 
	in \eqref{eq: Ratio.Dist}, and also \eqref{eq: logdist} returns 
	\begin{gather}\label{eq: LNdist}
		\Lambda_n
		\sim \frac{a_{q_p}}{\mu}
		\cdot\LN\left(0,
		\frac{
		\frac{\sigma^2}{\mu^2}
		+ \frac{(b^+_{q_p})^2 + (b^-_{q_p})^2 + 2a^+_{q_p}a^-_{q_p}}{a^2_{q_p}}
		- \frac{2b^2_{q_p}-2\tilde a_{q_p}a_{q_p}}{\mu a_{q_p}}}
		{n}
		\right).
	\end{gather}
\end{rem}
\section{Simulation and Callibration}\label{sec:simulate_callibrate}
In this section, we conduct Monte Carlo simulations of the distribution \eqref{eq: logdist} with $p=80\%$ for $n = 1000$ i.i.d variables, and $N = 10^5$ replications, with variables' common (continuous) distributions 
\begin{gather*}
	\mathtt{Normal(\mu,\sigma^2), Lognormal(\mu,\sigma^2), Exponential(\mu),}\\
	\mathtt{Raileigh(b), Generalized Pareto(k,s,\theta), Gamma(\alpha,\theta),}
\end{gather*}
where $\mu = \theta = 1, \sigma = k = b = s = 0.25, \alpha = 3$, and their practical estimated distribution. To enable a clear comparison between the log-normal formulated density function and the empirically estimated density function, both are plotted on the histogram of the $\Lambda_n$ statistics in Figure \ref{Fig:1}. One can easily observe how closely they approximate the actual density of this statistic, although, due to the sample size and the numbers $n$ and $N$, there are always some differences between the formulated and estimated distributions. The accumulated area between the two curves (which represents the difference in cumulative probability) is reported in Table \ref{Tbl:1}.
\begin{figure}[H]
	\includegraphics[scale = .8]{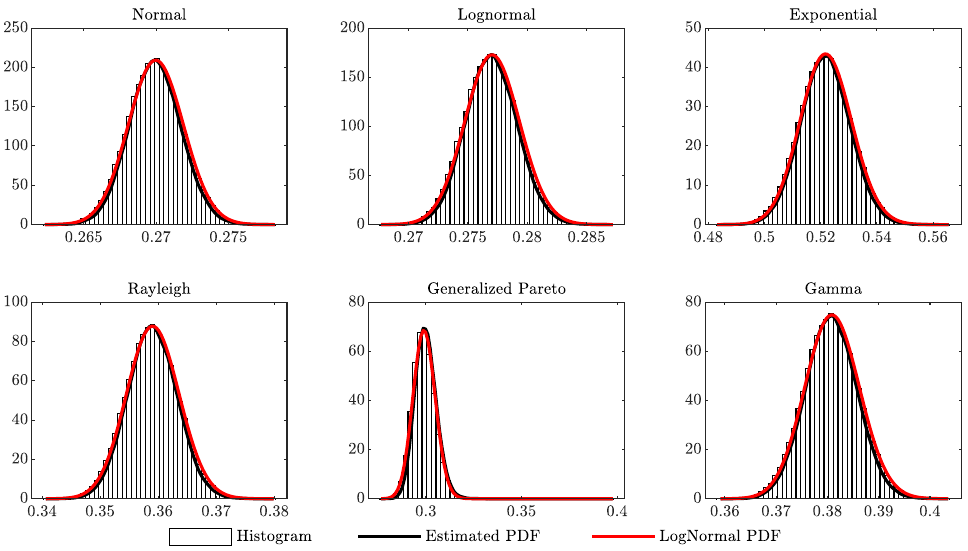}
	\caption{The histogram, analytical log-normal probability density function, and estimated probability density function of $\Lambda_n$ for i.i.d variables from different continuous distributions.}\label{Fig:1}
\end{figure}
\begin{table}[H]
	\centering
	\begin{tabular}{l|c}
		\hline
		\textbf{Distribution} & \textbf{Area between PDFs} \\
		\toprule
		Normal & 0.0713 \\
		LogNormal & 0.0708 \\
		Exponential & 0.0662 \\
		Rayleigh & 0.0687 \\
		Generalized Pareto & 0.0949 \\
		Gamma & 0.0709 \\
		\bottomrule
	\end{tabular}
	\caption{Cumulative area between analytic and estimated PDFs for different variable distributions.}\label{Tbl:1}
\end{table}
\section*{Acknowledgement}
The concept of quantile contributions was originally introduced to me by Tommi Sottinen and Klaus Grobys in the context of another study on applied statistics in economics, sincerely grateful to them for their valuable insights and inspiration. Nevertheless, the present work is an independent analytical study, and all parts of the research, analysis, and writing have been carried out solely by the author.
%
\bibliographystyle{siam}

\end{document}